\documentclass[10pt,journal]{IEEEtran}

\usepackage{latexsym,amssymb,amsmath, amsbsy,amsopn, amstext}
\usepackage{graphicx,epsfig,tikz,pgf,hyperref,cancel,color,float,ifpdf}
\usepackage{extarrows,algorithmic,algorithm}
\usepackage{amsmath, amsthm, amssymb,stfloats,algorithm}
\usepackage{subfigure}


\allowdisplaybreaks[4]

\renewcommand{\epsilon}{\varepsilon}
\graphicspath{{./}}

\newcommand{\ubar}{{\bf u}}

\newcommand{\R}{\mathbb{R}}

\newcommand{\ep}{\epsilon}

\newcommand{\C}{\mathcal{C}}
\newcommand{\D}{\mathcal{D}}

\newcommand{\Kzcbf}{K_{\mathrm{zcbf}}}
\newcommand{\Fr}{F_r}

\newcommand{\x}{{\bf x}}

\newtheorem{definition}{\bfseries Definition}
\newtheorem{proposition}{\bfseries Proposition}
\newtheorem{example}{\bfseries Example}
\newtheorem{theorem}{\bfseries Theorem}


\newtheorem{remark}{\bfseries Remark}


\title{Correctness Guarantees for the Composition of Lane Keeping and Adaptive Cruise Control}
\author{Xiangru Xu,\; Jessy W. Grizzle,\; Paulo Tabuada,\;Aaron D. Ames
	\thanks{This research is supported by NSF CPS Award 1239037.}
	\thanks{X. Xu and J.W. Grizzle are with the Department of Electrical Engineering and Computer Science, University of Michigan, Ann Arbor, MI, USA. email: {\tt\small xuxiangr@umich.edu}, {\tt\small grizzle@umich.edu}.}
	\thanks{P. Tabuada is with the Department of Electrical Engineering, University of California, Los Angles, CA, USA. email: {\tt\small tabuada@ucla.edu.}}
	\thanks{A. D. Ames is with the Department of Mechanical and Civil Engineering, California Institute of Technology, Pasadena, CA, USA. email: {\tt\small ames@caltech.edu}.}}

\begin{document}

\maketitle

\begin{abstract}
This paper develops a control approach with correctness guarantees for the simultaneous operation of lane keeping and adaptive cruise control.  The safety specifications for these driver assistance modules are expressed in terms of set invariance. Control barrier functions are used to design a family of control solutions that guarantee the forward invariance of a set, which implies satisfaction of the safety specifications. The control barrier functions are synthesized through a combination of sum-of-squares program and physics-based modeling and optimization.
{\color{black} A real-time quadratic program is posed to combine the control barrier functions with the  performance-based controllers, which can be either expressed as control Lyapunov function conditions or as black-box legacy controllers.  In both cases,  the resulting feedback control guarantees the safety of the composed driver assistance modules in a formally correct manner.
Importantly, the quadratic program admits a closed-form solution that can be easily implemented. The effectiveness of the control approach is demonstrated by simulations in the industry-standard vehicle simulator Carsim.  }\\

\end{abstract}

%

\begin{IEEEkeywords}
Correct-by-construction, Control barrier functions,  Safety, Quadratic program, Sum of squares optimization
\end{IEEEkeywords}

\section{Introduction}\label{sec:intro}
Recent years have witnessed a growing number of safety or convenience modules for automobiles~\cite{campbell2010autonomous,urmson2008autonomous}. Lane keeping, also called active lane keeping or lane keeping assist, is an evolution of lane departure warning~\cite{GerdLanePotential06,talvala2011pushing,huang2016development}, where instead of simply warning of imminent lane departure through vibration of the steering wheel or an audible alarm, the system corrects the vehicle's direction to keep it within its lane. Early systems corrected vehicle direction by differential braking, but current systems actively control steering to maintain lane centering, which is what we will term Lane Keeping (LK) in this paper. Another such system is Adaptive Cruise Control (ACC), which is a driver assistance system that significantly enhances conventional cruise control \cite{ioannou1993autonomous}. When there is no preceding vehicle, an ACC-equipped vehicle maintains a constant speed set by the driver, just as in conventional cruise control; when a preceding vehicle is detected and is driving at a speed slower than the preset speed, an ACC-equipped vehicle changes its control objective to maintaining a safe following distance. {\color{black} ACC uses deceleration/acceleration bounds that are much less than a vehicle's maximal capabilities to ensure driver comfort; when the ACC specification (such as the minimum time-headway) cannot be maintained with comfort-based deceleration bounds, almost all vehicles equipped with ACC either have a warning system or an emergency braking system. Though ACC is legally considered a convenience feature, its specification is still considered as a ``safety constraint'' in this paper, consistent with how an OEM would treat it. }


Worldwide, most of the major manufacturers are now {\color{black}offering passenger 
	vehicles equipped with ACC and LK.} Moreover, these features can be activated simultaneously at highway speeds and require limited driver supervision. In terms of the levels of automation defined by the National Highway Traffic Safety Administration (NHTSA) \cite{NHSTA2014Web}, such vehicles are already at Level 2 (automation of at least two primary control functions designed to work in unison without driver intervention), and they are approaching Level 3 (limited self-driving). The simultaneous control of the longitudinal and lateral dynamics of a vehicle is an important milestone in the bottom-up approach to full autonomy, and therefore, it is crucial to prove that the controllers associated with ACC and LK behave in a formally correct way when they are both activated.


Applying formal methods to the field of (semi-)autonomous driving has attracted much attention in recent
years \cite{asplund2012formal},\cite{seshia2015formal},\cite{forghani2016design}. Particularly, formal correctness guarantees on individual ACC or LK system have been developed by various means. For example, the verification of cruise control systems has been accomplished using formal methods such as satisfiability modulo theory, theorem proving~\cite{loos2011adaptive} and a counter-example guided approach~\cite{stursberg2004verification}; safety guarantees for LK have been established using Lyapunov stability analysis by assuming the longitudinal speed is constant ~\cite{guldner1996analysis,son2015robust} or varying~\cite{GerdLanePotential06,GerdLaneCompa08}.
Furthermore, the so-called correct-by-construction control design, which aims to synthesize controllers to guarantee the closed-loop system satisfies the specification by construction and hence eliminating the need for verification, has also emerged as a viable means of achieving safety. For example, in \cite{petter2016cst}, two provably correct control design methods were proposed for ACC, which rely on fixed-point computations of certain set-valued mappings on the continuous state space or a finite-state abstraction, respectively.


{\color{black}For the simultaneous operation of two (or even more) safety or convenience modules, which are typically coupled through the vehicle's dynamics, it is more challenging to establish correctness guarantees.}
A contract-based design method employing assume-guarantee reasoning is a potential recipe for the compositional design of complex systems~\cite{sangiovanni2012taming,benvenuti2008contract}. Its main idea is to formally define the assumptions and guarantees of each subsystem, which are called contracts among the subsystems, and based on the contracts, to design (or establish) formal guarantees on the overall system. Different types of assume-guarantee formalisms have been proposed, such as temporal logic formulas~\cite{nuzzo2014contract} and supply/demand rates~\cite{kim2015compositional}. The composition of LK and ACC has been studied on the basis of contracts. A passivity-based approach was proposed in \cite{dai2016hscc}, where the ACC and LK dynamics are described as multi-modal port Hamiltonian systems, based on which some energy functions are constructed to prove trajectories of the composed system do not enter a specified unsafe region.
In \cite{stan2016compo}, the system dynamics are represented as discrete-time linear parameter-varying systems, and contracts are established for the variables that couple the two subsystems; controlled-invariant sets are constructed for the ACC and LK subsystems individually to meet the terms of the contracts using an iterative algorithm, such that the overall controller is guaranteed to ensure the safety of the composed system.
Despite these very interesting initial contributions, many {\color{black}safety guarantee} problems on the composition of LK and ACC are still largely open and deserve further investigation.



Turning now to the more general literature on safety specifications, when safety is expressed as set invariance, controlled invariant sets are used to encode both the correct behavior of the closed-loop system and a set of feedback control laws that will achieve it (see~\cite{mareczek2002invariance,wolff2005invariance,kolmanovsky2014reference,vecchio2016LK} and references therein). Under the name of \textit{invariance control}, \cite{mareczek2002invariance} and \cite{wolff2005invariance} extended Nagumo's Theorem to  allow higher-order derivative conditions on the boundary of the controlled invariant set.  As an add-on control scheme, \textit{reference and command governors} utilize the notion of controlled invariance to enforce constraint satisfaction and ensure that the modified reference command is as close  as possible to the original reference command \cite{kolmanovsky2014reference}. In \cite{vecchio2016LK}, characterization of the controlled invariant set for LK was given, which was then used to derive a feedback control strategy to maintain a vehicle in its lane. {\color{black}The common intent of these methods is to construct a controlled invariant set that encodes the safety specifications, and then construct a feedback law that ensures that trajectories of the controlled systems are confined within the set.}

A barrier function (certificate) is another means to prove a safety property of a system based on set invariance. It seeks a function whose sub-level sets (or super-level sets, depending on the context) are all invariant, without the difficult task of computing the system's reachable set~\cite{prajna2004safety,prajna2007framework}.
In \cite{Aronbarriercdc14}, the barrier condition in~\cite{prajna2004safety} was relaxed by only requiring a single super-level set of the function, which represents the safe region, to be invariant. A control barrier function (CBF) extends barrier functions from dynamical systems to control systems.
{\color{black}When CBFs are unified with a performance controller, which can be expressed as a control Lyapunov function (CLF) condition or a legacy controller, through a quadratic programming (QP) framework, safety can be always guaranteed while the performance objective is overridden when safety and performance are in conflict (see  Figure \ref{fig:flow}).
	The QP-based approach was introduced in \cite{Aronbarriercdc14} where it was applied to ACC safety control design. Further work along this line is available in~\cite{Xu2015ADHS,Aakar2015experiment,borrmann2015control,Hsu15Backstepping,quan2016exponential}.}

\begin{figure}[!htb]
	\centering
	\includegraphics[width=6.5cm]{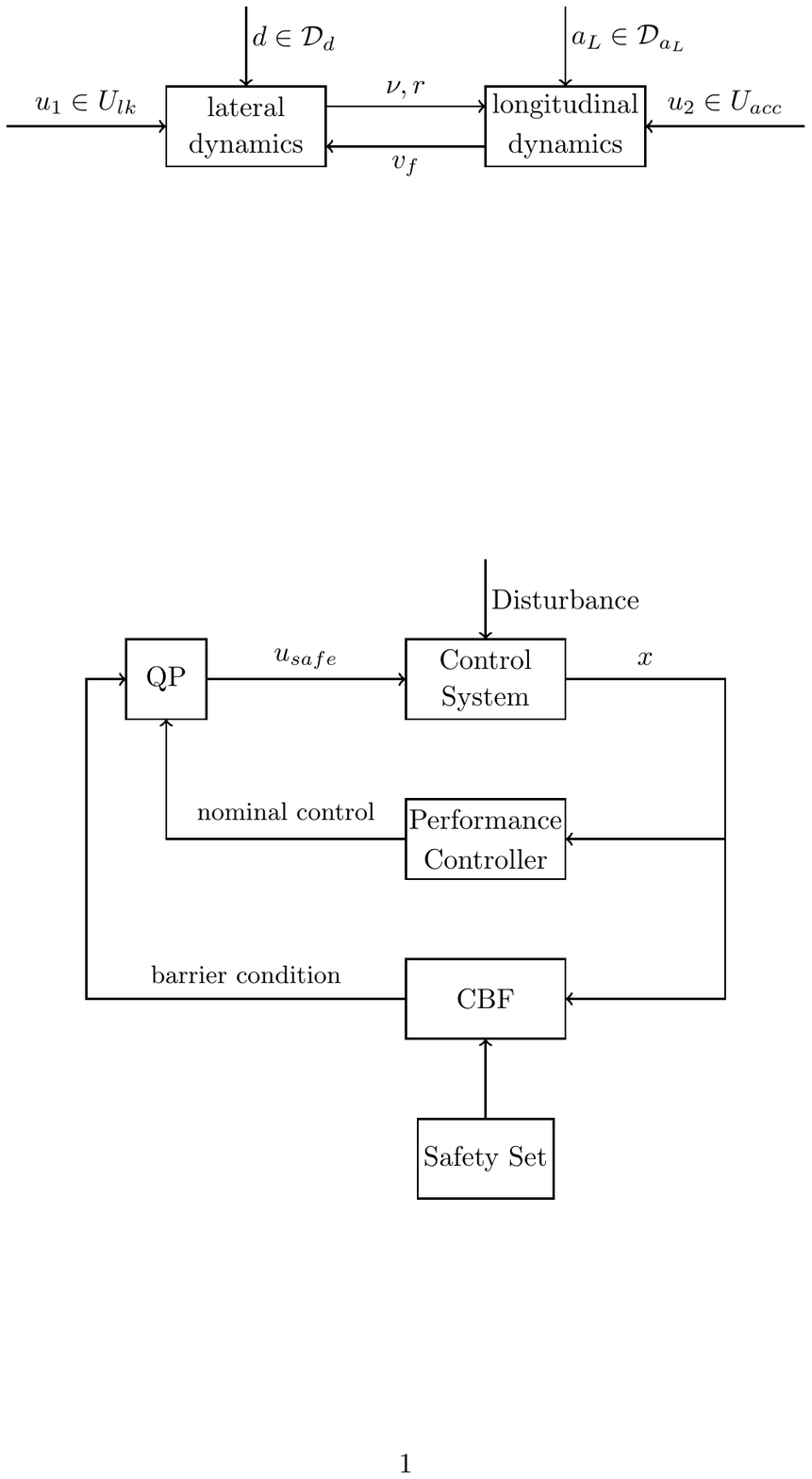}
	\caption{The QP-based framework that unifies the safety constraint and the control performance. The controller $u_{safe}$ is designed to satisfy the barrier condition, which ensures the control system satisfy the safety constraint (i.e., the state $x$ stays within the safety set), and is as close as possible to the given nominal control, which makes the control system achieve the performance objective when it is not conflicting with the safety constraint. }\label{fig:flow}
\end{figure}

This paper develops a modular correct-by-construction control approach that allows for individual and simultaneous activation of LK and ACC, whose dynamics are described by a continuous-time linear system and a linear parameter-varying system, respectively.  The interactions of the two modules through the dynamics of the vehicle and the environment are captured in a ``contract'', which consists of a set of assumptions and guarantees. CBFs are constructed to respect the contract where the CBF for LK is synthesized with the help of  sum-of-squares (SOS) optimization~\cite{parrilo2000thesis,russ2010lqr,ani2013alongtraj} and the CBF for ACC is constructed by physics-based modeling and optimization. Through a QP that unifies the CBF and the performance controller, the designed controller mediates the safety and performance and is correct by construction under a clearly delineated set of assumptions. The practicality of the approach will be illustrated through calculations and simulation on a model of a mid-size passenger vehicle.

The rest of the paper is organized as follows. Section \ref{sec:cbf} introduces the definition and some theoretic results about control barrier functions. Section \ref{sec:preliminary} gives the dynamical models and the safety specifications for LK and ACC, based on which the composition problem studied in the paper is formulated and the assumptions and guarantees between LK and ACC are given. Section~\ref{sec:barrsyn} constructs CBFs for LK and ACC, respectively. Section \ref{sec:composition} gives the QP-based framework, based on which the input that solves the composition problem is provided. Simulation results are presented in Section~\ref{sec:simu} and finally, some conclusions in Section \ref{sec:conclusion}.

\emph{Notation}. The boundary and the interior of a set $\mathcal{S}$ are denoted as $\partial \mathcal{S}$ and $\mathrm{Int}(\mathcal{S})$ (or $\mathring{S}$), respectively. 
The commutative ring of real valued polynomials in $n$ variables $x_1,...,x_n$ is denoted as $\mathcal{R}[x_1,...,x_n]$, and as $\mathcal{R}_m[x_1,...,x_n]$ if its degree is $m$.
The set of sum of squares polynomials in $n$ variables $x_1,...,x_n$ is denoted as $\Sigma[x_1,...,x_n]$, and  as $\Sigma_m[x_1,...,x_n]$ if its degree is $m$.


%
%







\section{Preliminaries on Control Barrier Functions}\label{sec:cbf}





This section introduces the background on control barrier functions that will be used later.

Consider a nonlinear system on $ \R^n$,
\begin{eqnarray}
	\label{eqn:dynamicalsystem}
	\dot{x} = f(x),
\end{eqnarray}
with $f$ locally Lipschitz continuous. The solution of~\eqref{eqn:dynamicalsystem} with initial condition $x_0\in\mathbb{R}^n$ is denoted by $x(t,x_0)$ (or simply $x(t)$).
A set $\mathcal{S}$ is called {\it forward invariant} if for every $x_0 \in \mathcal{S}$, $x(t,x_0) \in \mathcal{S}$ for all $t \in I(x_0)$, where $I(x_0)$ is the \emph{maximal interval of existence} of $x(t,x_0)$.

Given a continuously differentiable function $h: \R^n \to \R$, define a closed set $\C$ as follows
\begin{align}
	\C &= \{ x \in \R^n : h(x) \geq 0\}. \label{eqn:superlevelsetC}
\end{align}
In what follows, it will also be assumed that $\C$ is nonempty and has no isolated points, that is, $\mathrm{Int}({\C}) \not= \emptyset$ and $\overline{\mathrm{Int}(\mathcal{C})} = \mathcal{C}$. 
{\color{black}The Lie derivative of $h(x)$ along the vector field $f(x)$ is denoted as $L_fh(x)$, that is, $L_fh(x)=\frac{\partial h}{\partial x}f(x)$.}

\begin{definition}
	\label{def:barrierfunctions2}\cite{Xu2015ADHS}
	Consider a  dynamical system \eqref{eqn:dynamicalsystem} and the set $\C$ defined by \eqref{eqn:superlevelsetC} for some continuously differentiable function $h: \R^n\rightarrow \R$. If there exist a constant $\gamma>0$ and a set $\D$ with $\C\subseteq\mathcal{D}\subset \R^n$ such that
	\begin{align}
		L_f h(x)& \geq  -\gamma h(x),\forall \; x \in \D,\label{eqn:generalinequality3}
	\end{align}
	then the function $h$ is called a (zeroing) barrier function.
\end{definition}

Existence of a (zeroing) barrier function implies the forward invariance of $\mathcal{C}$, as shown by the following theorem.

\begin{theorem}\label{thm:GBF}\cite{Xu2015ADHS}
	Given a dynamical system \eqref{eqn:dynamicalsystem} and a set $\C$ defined by \eqref{eqn:superlevelsetC} for some continuously differentiable function $h: \R^n\rightarrow \R$,
	if $h$ is a barrier function defined on the set $\D$ with $\C\subseteq\mathcal{D}\subset \R^n$, then $\C$ is forward invariant.
\end{theorem}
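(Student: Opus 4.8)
The plan is to reduce the set-invariance statement to a scalar differential inequality along trajectories and then close it with a comparison (Grönwall-type) estimate. First I would fix $x_0 \in \C$ and let $x(t) = x(t,x_0)$ be the corresponding solution on its maximal interval of existence $I(x_0)$. Since $f$ is locally Lipschitz, $\dot{x}=f(x)$ has a continuous right-hand side, so $x(\cdot)$ is $C^1$ in $t$; because $h$ is continuously differentiable, the scalar function $y(t) := h(x(t))$ is then $C^1$ as well, with
\begin{equation*}
\dot{y}(t) = \frac{\partial h}{\partial x}\big(x(t)\big)\,f\big(x(t)\big) = L_f h\big(x(t)\big).
\end{equation*}
In these terms, forward invariance of $\C = \{x : h(x)\ge 0\}$ is exactly the assertion that $y(t)\ge 0$ for all $t \in I(x_0)\cap[0,\infty)$, given the initial datum $y(0)=h(x_0)\ge 0$.

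Next I would bring in the barrier condition \eqref{eqn:generalinequality3}. Whenever $x(t)\in\D$ it gives $\dot{y}(t) = L_f h(x(t)) \ge -\gamma\,h(x(t)) = -\gamma\,y(t)$. Multiplying by the integrating factor $e^{\gamma t}$ (equivalently, invoking the comparison lemma against the scalar system $\dot z = -\gamma z$, $z(0)=y(0)$, whose solution is $z(t)=y(0)e^{-\gamma t}$) shows that $\frac{d}{dt}\big(e^{\gamma t}y(t)\big)\ge 0$, hence $y(t)\ge y(0)e^{-\gamma t}\ge 0$ on any interval over which the differential inequality is valid. Since $y(0)\ge 0$, this keeps $h(x(t))\ge 0$, i.e. the trajectory inside $\C$.

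The crux, and the step I expect to be the main obstacle, is making sure the differential inequality is actually available along the \emph{entire} trajectory, since \eqref{eqn:generalinequality3} is assumed only on $\D$ rather than on all of $\R^n$. Here the inclusion $\C\subseteq\D$ does the essential work: as long as the trajectory stays in $\C$ it automatically stays in $\D$, so the inequality, and therefore the bound $y(t)\ge y(0)e^{-\gamma t}\ge 0$, is self-sustaining. To turn this into a rigorous argument I would proceed by contradiction through a first-exit time. Set $T = \sup\{t \in I(x_0) : x(s)\in\C \ \text{for all } s\in[0,t]\}$. On $[0,T)$ the trajectory lies in $\C\subseteq\D$, so the comparison bound yields $y\ge 0$ there, and continuity of $y$ forces $y(T)\ge 0$, i.e. $x(T)\in\C$.

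Finally I would rule out $T < \sup I(x_0)$. Extending the estimate just beyond $T$ contradicts the maximality of $T$: using that $\D$ contains a neighborhood of $\partial\C$ the inequality persists for a short further time, so $y(t)\ge y(T)e^{-\gamma(t-T)}\ge 0$ there; equivalently, the boundary condition $\dot{y}(T)\ge -\gamma\,y(T)\ge 0$ (a Nagumo-type tangency statement) prevents the trajectory from crossing out through $\partial\C$. Either way $T=\sup I(x_0)$, so $x(t)\in\C$ for all $t\in I(x_0)$, which is precisely the forward invariance of $\C$ claimed in the theorem.
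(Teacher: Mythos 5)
The paper itself never proves Theorem~\ref{thm:GBF}; it is imported verbatim from \cite{Xu2015ADHS}, so your proposal can only be judged against the stated hypotheses. Your reduction to the scalar inequality $\dot y \ge -\gamma y$ with $y(t) = h(x(t))$, the integrating-factor/comparison estimate, and the first-exit-time argument giving $y(T)\ge 0$ are all correct, and this is indeed the standard comparison-lemma route for such results. The genuine gap is in the final step, where you rule out $T < \sup I(x_0)$: neither of your two justifications is available under the theorem's actual hypotheses. The statement only assumes $\C \subseteq \D \subset \R^n$, so $\D$ may equal $\C$; your claim that ``$\D$ contains a neighborhood of $\partial\C$'' is an additional assumption, not a consequence of anything given. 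The fallback ``Nagumo-type'' argument is not valid either: the pointwise condition $\dot y(T) \ge -\gamma\, y(T) \ge 0$ at the single time $T$ does not prevent $y$ from becoming negative immediately afterwards (take $y(t) = -(t-T)^2$, which has $y(T)=0$ and $\dot y(T)=0$); Nagumo's theorem needs the tangency condition on all of $\partial\C$ \emph{plus} regularity such as $\nabla h \neq 0$ on $\partial\C$, which is also not assumed.

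Moreover, this gap cannot be closed by a smarter argument, because with $\D = \C$ the statement is false as literally written. Take $n=2$, $h(x_1,x_2) = -\left(\max(x_1,0)\right)^2$, which is continuously differentiable, so that $\C = \{x_1 \le 0\}$ (closed, nonempty interior, equal to the closure of its interior), and $f \equiv (1,0)$. On $\C$ one has $h \equiv 0$ and $\nabla h \equiv 0$, so $L_f h = 0 \ge -\gamma h = 0$ and the inequality of Definition~\ref{def:barrierfunctions2} holds on $\D = \C$ for every $\gamma > 0$; yet the solution from the origin is $x(t) = (t,0)$, which leaves $\C$ instantly. So forward invariance genuinely requires $\D$ to be a neighborhood of $\C$ (i.e.\ $\C \subseteq \mathrm{Int}(\D)$), or a nondegeneracy condition like $\nabla h \neq 0$ on $\partial\C$ --- precisely the hypothesis you smuggled in. Once that hypothesis is made explicit, your neighborhood argument (the inequality persists on $[T, T+\delta]$ by continuity, hence $y(t) \ge y(T)e^{-\gamma(t-T)} \ge 0$ there, contradicting the maximality of $T$) is complete and correct; the pointwise-tangency ``equivalently'' clause should simply be deleted.
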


%

Consider an affine control system of the following form
\begin{eqnarray}
	\label{eqn:controlsys}
	\dot{x} = f(x) + g(x) u,
\end{eqnarray}
with $f$ and $g$ locally Lipschitz continuous, $x \in \R^n$ and $u \in U \subset \R^m$.

\begin{definition}\label{dfn:newcbf}\cite{Xu2015ADHS}
	Given a set $\mathcal{C} \subset \R^n$ defined by \eqref{eqn:superlevelsetC} for a continuously differentiable function $h: \R^n \to \R$,  {\color{black}the function $h$ is called} a (zeroing) control barrier function defined on   set $\mathcal{D}$ with $\C\subseteq\mathcal{D}\subset \R^n$, if  there exists a constant $\gamma>0$ such that\footnote{A more general definition for the (zeroing) CBFs that involves extended class $\mathcal{K}$ functions can be found in \cite{Xu2015ADHS}.}
	\begin{align}\label{ineq:ZCBF}
		& \sup_{u \in U}  \left[ L_f h(x) + L_g h(x) u + \gamma h(x)\right] \geq 0,\;\forall x \in \mathcal{D}.
	\end{align}
\end{definition}


Given a CBF $h$, for all $x\in\D$, define the set
\begin{equation}\label{zcbfinputset}
	\Kzcbf(x) =  \{ u \in U : L_f h(x) + L_g h(x) u + \gamma h(x) \geq 0\}.
\end{equation}
The following result guarantees the forward invariance of $\mathcal{C}$ when inputs are selected from $\Kzcbf(x)$.

\begin{theorem}\label{cor:zbf}\cite{Xu2015ADHS}
	Assume given a set $\mathcal{C} \subset \R^n$ defined by \eqref{eqn:superlevelsetC} for a continuously differentiable function $h$. If $h$ is a CBF on $\D$, then any locally Lipschitz continuous controller $u: \mathcal{D} \to U$ such that $u(x) \in \Kzcbf(x)$ will render the set $\mathcal{C}$ forward invariant.
\end{theorem}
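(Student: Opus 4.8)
The plan is to reduce the control-system claim to the autonomous barrier-function result already proved in Theorem \ref{thm:GBF}. Given a locally Lipschitz feedback $u: \D \to U$ satisfying $u(x) \in \Kzcbf(x)$, I would first substitute it into \eqref{eqn:controlsys} and introduce the closed-loop vector field $\Fcl(x) := f(x) + g(x) u(x)$, so that the controlled dynamics collapse to an autonomous system $\dot{x} = \Fcl(x)$ of exactly the form \eqref{eqn:dynamicalsystem}. The first task is to confirm that this closed-loop system is admissible for Theorem \ref{thm:GBF}, i.e., that $\Fcl$ is locally Lipschitz continuous. This holds because $f$ and $g$ are locally Lipschitz by assumption and $u$ is locally Lipschitz by hypothesis; on any compact set these maps are bounded and Lipschitz, so their sum and product $f + g\,u$ remain locally Lipschitz, which in turn guarantees existence of solutions on the maximal interval referenced in the definition of forward invariance.

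The second and central step is to show that $h$ is a (zeroing) barrier function for the closed-loop system in the sense of Definition \ref{def:barrierfunctions2}. Using linearity of the Lie derivative in the vector field, I would write $L_{\Fcl} h(x) = \frac{\partial h}{\partial x}\big(f(x) + g(x) u(x)\big) = L_f h(x) + L_g h(x) u(x)$. The membership $u(x) \in \Kzcbf(x)$ is, by \eqref{zcbfinputset}, exactly the inequality $L_f h(x) + L_g h(x) u(x) + \gamma h(x) \geq 0$; combining it with the identity above yields $L_{\Fcl} h(x) \geq -\gamma h(x)$ for all $x \in \D$, with the same constant $\gamma > 0$ furnished by the CBF property. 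This is precisely condition \eqref{eqn:generalinequality3} for the field $\Fcl$ on $\D$.

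With both facts in place the conclusion is immediate: $h$ meets every hypothesis of Theorem \ref{thm:GBF} for the closed-loop dynamics $\dot{x} = \Fcl(x)$, and that theorem delivers forward invariance of $\C$ directly. I do not anticipate a genuine obstacle here, since the set $\Kzcbf(x)$ was defined so that the algebraic translation in the second step is automatic; the only point deserving care is the regularity bookkeeping of the first step, namely verifying that the closed-loop field inherits local Lipschitzness so that the autonomous invariance theorem is legitimately applicable.
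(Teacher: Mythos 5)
Your proof is correct and is essentially the argument behind this result: the paper defers the proof to the cited reference \cite{Xu2015ADHS}, where it is obtained in exactly this way, by substituting the locally Lipschitz feedback to form the closed-loop field $f(x)+g(x)u(x)$, observing that membership in $\Kzcbf(x)$ makes $h$ a zeroing barrier function for that field on $\D$, and invoking Theorem \ref{thm:GBF}. Nothing is missing; the Lipschitz bookkeeping you flag is the only hypothesis that needs checking, and you handle it correctly.
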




{\color{black} In some cases,  seeking a CBF that is everywhere continuously differentiable can be too restrictive.
	Below, it is briefly pointed out how the assumption of continuous differentiability can be relaxed to a continuous function constructed from a finite set of continuously differentiable functions.}


{\color{black}
	Consider $p~(p\ge 2)$ continuously differentiable functions $h_1,...,h_p$ where $h_i: \R^n\rightarrow \R$. Assume that the sets
	$\C_i := \{ x \in \R^n : h_i(x) \geq 0\}$ satisfy
	$\mathrm{Int}({\C_i}) \not= \emptyset$ and $\overline{\mathrm{Int}(\mathcal{C}_i)} = \mathcal{C}_i$.
	Suppose that $\R^n$ is partitioned into $p$ closed sets $\mathcal{S}_1,...,\mathcal{S}_p$ such that $\cup_{i=1}^p \mathcal{S}_i =\R^n$ and $\mathrm{Int}(\mathcal{S}_i)\cap \mathrm{Int}(\mathcal{S}_j)=\emptyset$, $\forall i,j,i\neq j$.
	For any $i,j$ such that
	$\mathcal{S}_i \cap \mathcal{S}_j \neq \emptyset$, assume that $h_{i}(x)=h_{j}(x)$ for $x\in \mathcal{S}_i \cap \mathcal{S}_j $. Then the function $h:\R^n\rightarrow \R$ by
	\begin{align}\label{glueh}
		h|_{\mathcal{S}_i}(x)=h_i(x)
	\end{align}
	is well defined and continuous, and the set $\C=\{x\in\R^n|h(x)\ge 0\}$ is closed, has non-empty interior, and does not have isolated points. }

\begin{figure}[!htb]
	\centering
	\includegraphics[width=6.3cm]{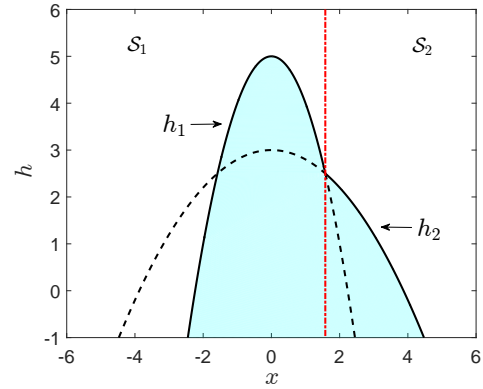}
	\caption{The safe set $\mathcal{C}$ defined in Example \ref{ex1} is the shaded region.}\label{fig:bar}
\end{figure}

{\color{black}
	\begin{example}\label{ex1}
		Consider two functions $h_1(x)=-x^2+5$, $h_2(x)=-0.2x^2+3$ and the partition $\mathcal{S}_1=(-\infty,\sqrt{10}/2]$, $\mathcal{S}_2=[\sqrt{10}/2,\infty)$. Clearly, $\mathrm{Int}(\mathcal{S}_1)\cap \mathrm{Int}(\mathcal{S}_2)=\emptyset$, $\mathcal{S}_1\cup \mathcal{S}_2=\R$ and $h_1(x)=h_2(x)$ when $x=\sqrt{10}/2$. Then, the safe set $\mathcal{C}:=\{x|h(x)\geq 0\}$ with $h(x)$ defined in \eqref{glueh} is shown as the shaded area in Figure \ref{fig:bar}.
	\end{example}}

	For all $x \in \mathcal{S}_i~(1\le i\le p)$, suppose that there exists a constant $\gamma_i>0$ such that
	\begin{align*}
		& \sup_{u \in U}  \left[ L_f h_i(x) + L_g h_i(x) u + \gamma_i h_i(x)\right] \geq 0,
	\end{align*}
	and define the set $K^i(x)$ as
	\begin{equation}
		K^i(x) =  \{ u \in U| L_f h_i(x) + L_g h_i(x) u + \gamma_i h_i(x) \geq 0\}. \nonumber
	\end{equation}
	
	For any $x\in\R^n$, define the set $K(x)$ as
	\begin{align}
		K(x)= \cap_{s\in\{i|x \in S_i\}} K^{s}(x).\label{con:glubar}
	\end{align}
	
	
	The following proposition extends Theorem \ref{cor:zbf} to the case of a safe set defined by a continuous function. Its proof is similar to that of Theorem \ref{cor:zbf} and is omitted here.
	
	
	\begin{proposition}\label{thm:gluebarrier}
		Given $p~(p\ge 2)$ continuously differentiable functions $h_1,...,h_p$ where $h_i: \R^n\rightarrow \R$ and a partition of $\R^n$ by $p$ closed sets $\mathcal{S}_1,...,\mathcal{S}_p$ such that
		if $\mathcal{S}_i \cap \mathcal{S}_j \neq \emptyset$, then $h_{i}(x)=h_{j}(x)$ for $x\in \mathcal{S}_i \cap \mathcal{S}_j $, define a function $h$ as in \eqref{glueh} and the set $\mathcal{C}=\{x\in\R^n|h(x)\ge 0\}$.
		If $K(x)\neq \emptyset$ for each  $x\in\R^n$, then any locally Lipschitz continuous controller $u: \R^n \to K(x)$ will render the set $\mathcal{C}$ forward invariant under the closed-loop system associated with \eqref{eqn:controlsys}.
	\end{proposition}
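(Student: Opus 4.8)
The plan is to mirror the proof of Theorem~\ref{cor:zbf}, tracking the single scalar quantity $v(t) := h(x(t))$ along a closed-loop trajectory and showing that it stays nonnegative; the one genuinely new ingredient is to cope with the fact that $h$ is merely continuous---and not differentiable---across the interfaces $\mathcal{S}_i \cap \mathcal{S}_j$.

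First I would settle existence and regularity. Since $f$, $g$ and the feedback $u$ are all locally Lipschitz, the closed-loop vector field $\Fcl(x) = f(x) + g(x)u(x)$ is locally Lipschitz, so for each $x_0 \in \mathcal{C}$ there is a unique solution $x(t,x_0)$ on a maximal interval $I(x_0)$. Crucially, because $\Fcl$ is continuous, $\dot x(t) = \Fcl(x(t))$ is continuous in $t$, so $x(\cdot)$ is in fact $C^1$; consequently each map $g_i(t) := h_i(x(t))$ is $C^1$ with $g_i'(t) = L_f h_i(x(t)) + L_g h_i(x(t)) u(x(t))$. Writing $I(t) := \{ i : x(t) \in \mathcal{S}_i \}$, the gluing condition $h_i = h_j$ on $\mathcal{S}_i \cap \mathcal{S}_j$ gives $g_i(t) = v(t)$ for every $i \in I(t)$, and the hypothesis $u(x(t)) \in K(x(t)) \subseteq K^i(x(t))$ yields $g_i'(t) \ge -\gamma_i h_i(x(t)) = -\gamma_i v(t)$ for each $i \in I(t)$.

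The heart of the argument, and the step I expect to be the main obstacle, is to convert these per-piece smooth estimates into a usable differential inequality for the nonsmooth function $v$. I would work with the lower-right Dini derivative $D_+ v(t) = \liminf_{s \to 0^+}(v(t+s)-v(t))/s$. The key observation is that for all sufficiently small $s>0$ the point $x(t+s)$ lies in some $\mathcal{S}_{j(s)}$ with $j(s) \in I(t)$: indeed $x(t+s) \to x(t)$, the regions are finite in number, and each is closed, so any region recurring for arbitrarily small $s$ must already contain $x(t)$. Since $v(t+s) = g_{j(s)}(t+s)$ and $v(t) = g_{j(s)}(t)$, a uniform (finite-index) first-order estimate then gives $D_+ v(t) \ge \min_{i \in I(t)} g_i'(t) \ge \min_{i \in I(t)} (-\gamma_i v(t))$.

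Finally I would close the invariance argument by a sign analysis rather than the usual exponential comparison. Whenever $v(t) \le 0$ the bound above forces $D_+ v(t) \ge 0$, since each $-\gamma_i v(t) \ge 0$; hence on any interval on which $v$ is nonpositive it is nondecreasing. Suppose for contradiction that $v(t_1) < 0$ for some $t_1 \in I(x_0)$ with $t_1 > 0$; letting $t_0 := \sup\{ t \in [0,t_1] : v(t) = 0 \}$, continuity gives $v(t_0)=0$ and $v < 0$ on $(t_0,t_1]$, on which $D_+ v \ge 0$ makes $v$ nondecreasing, whence $v(t_1) \ge v(t_0) = 0$---a contradiction. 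Therefore $h(x(t)) = v(t) \ge 0$ for all $t \ge 0$ in $I(x_0)$, i.e.\ $x(t,x_0) \in \mathcal{C}$, which is exactly forward invariance of $\mathcal{C}$. The only routine facts invoked are that a continuous function with nonnegative lower-right Dini derivative is nondecreasing, and that $\mathcal{C}$ inherits the closedness and nonempty-interior properties already established for the glued $h$.
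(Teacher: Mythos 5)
Your proof is correct, and it does more than the paper itself, which omits the argument entirely with the remark that it ``is similar to that of Theorem~\ref{cor:zbf}.'' The smooth case behind Theorem~\ref{cor:zbf} is usually closed with the comparison lemma ($\dot h \ge -\gamma h$ implies $h(x(t)) \ge h(x_0)e^{-\gamma t}$), which is exactly what breaks down here: the glued $h$ of \eqref{glueh} is not differentiable across the interfaces, and the gains $\gamma_i$ differ from piece to piece, so no single smooth differential inequality is available. You supply precisely the two ingredients needed to repair this. First, the finite-closed-cover observation---any $\mathcal{S}_j$ visited at times $t+s_k$ with $s_k \to 0^+$ must, by closedness, already contain $x(t)$---which lets you express forward difference quotients of $v(t)=h(x(t))$ through the finitely many $C^1$ functions $h_i$ active at $x(t)$ and obtain $D_+ v(t) \ge \min_{i \in I(t)} \bigl(-\gamma_i v(t)\bigr)$; the uniformity of the first-order remainders over the finite index set, which you invoke, is what makes the liminf estimate legitimate. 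Second, you replace the exponential comparison with a sign argument: wherever $v \le 0$ the Dini derivative is nonnegative, so by the standard monotonicity theorem for continuous functions with nonnegative lower-right Dini derivative, $v$ cannot cross from $0$ to negative values. The hypotheses are used exactly where they must be---local Lipschitz continuity of $f$, $g$, $u$ for existence and $C^1$ regularity of trajectories, and the assumption $K(x)\neq\emptyset$ with $u(x)\in K(x)$ on all of $\R^n$ (not merely on $\C$) so that the per-piece inequalities remain valid even at points where $v(t)<0$ in the contradiction argument. The only superfluous remark is the final appeal to closedness and nonempty interior of $\C$; forward invariance as defined needs neither.
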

	
	

\section{Problem Formulation}\label{sec:preliminary}
In this section, we first introduce the individual models and specifications for LK and ACC, respectively. Then,  we formulate a composition problem that is studied in the paper, and propose an assume-guarantee contract between LK and ACC.



\subsection{Dynamic Models}

\begin{figure*}[!hbt]
	\begin{align}
		\left(  \begin{array}{c} \dot{y} \\ \color{black}{\dot{\nu}} \\ \dot{\Delta \psi}\\ \color{black}{\dot{r}} \end{array} \right)
		&= \left(  \begin{array}{cccc} 0 & 1 & \color{black}{v_f} & 0 \\
			0 & -\frac{ C_{f} + C_{r} }{ m\color{black}{v_f} } & 0 & \frac{b C_{r} - a C_{f} }{ m\color{black}{v_f} } - \color{black}{v_f} \\
			0 & 0 & 0 & 1  \\
			0 & \frac{b C_{r} - a C_{f} }{ I_{z} \color{black}{v_f} } & 0 & -\frac{a^2 C_{f} + b^2 C_{r} }{ I_{z} \color{black}{v_f} }
		\end{array} \right)
		\left( \begin{array}{c} y \\ \nu \\ \Delta \psi \\ r \end{array} \right)
		+ \left(  \begin{array}{c} 0 \\ \frac{C_{f} }{m} \\ 0 \\ \frac{aC_{f} }{I_{z}} \end{array} \right)u_1 + \left(  \begin{array}{r} 0 \\ 0 \\ -1 \\ 0 \end{array} \right)d. \label{LKmodel}
	\end{align}
	\begin{align}
		\label{ACCmodel}
		\left(  \begin{array}{c} \color{black}{\dot{v}_f} \\ \dot{v}_l \\ \dot{D} \end{array} \right)
		&= \left( \begin{array}{c} -\frac{c_0+c_1v_f}{m} \\ a_L \\ v_l-v_f \end{array} \right) +
		\left(  \begin{array}{c} \frac{1}{m} \\ 0 \\ 0 \end{array} \right)u_2 +
		\left(  \begin{array}{c} -\color{black}{\nu r} \\ 0 \\ 0 \end{array} \right).
	\end{align}
\end{figure*}

The LK model used in this paper is the lateral-yaw model as described in \cite{GerdLanePotential06,talvala2011pushing,rajamani2011bookvehicle}, and the ACC model is the point-mass model in \cite{petter2016cst,Aronbarriercdc14}. Their dynamics are given in \eqref{LKmodel} and \eqref{ACCmodel}, respectively.

Equation \eqref{LKmodel} describes the lateral-yaw dynamics of the controlled vehicle. In \eqref{LKmodel}, $\x_1:=(y,\nu,\Delta \psi,r)'$ is the state, where $y,\nu,\Delta \psi$ and $r$ represent the lateral displacement from the center of the lane, the lateral velocity, the yaw angle deviation in road-fixed coordinates, and the yaw rate, respectively; the input $u_1=\delta_f$ is the steering angle of the front wheels;  and $d$ is the desired yaw rate, which is viewed as a time-varying external disturbance and computed from road curvature by $d=v_f/R_0$ where $R_0$ is the (signed) radius of the road curvature and $v_f$ is the vehicle's longitudinal velocity. Moreover,
$m$ is the total mass of the vehicle, and $C_f,C_r,a$ and $b$ are parameters of the tires and vehicle geometry that are all positive numbers.

Equation \eqref{ACCmodel} describes the  longitudinal dynamics of the preceding vehicle and controlled vehicle. In \eqref{ACCmodel}, $\x_2:=(v_f,v_l,D)'$ is the state, which represent the following car's speed, the lead car's speed and the distance between them, respectively; $u_2=F_w$ is the input that represents the longitudinal force developed by the wheels; $F_r=c_0 + c_1 v_f +c_1 v_f^2$ is the aerodynamic drag, with constants $c_0,c_1,c_2$ that can be determined empirically; $a_L$ is the overall acceleration/deceleration of the lead car.

Equations \eqref{LKmodel} and \eqref{ACCmodel} are rewritten compactly as follows:
\begin{align}
	\dot{\x}_1 & = f_1(\x_1,v_f) + g_1(\x_1)u_1 + \Delta f_1(d), \label{PVLK}\\
	\dot{\x}_2 & = f_2(\x_2) + g_2(\x_2)u_2 + \Delta f_2(\nu r, a_L), \label{PVACC}
\end{align}
where
\begin{align*}
	& f_1(\x_1,v_f)=A_1(v_f)\x_1, \; g_1(\x_1)=B_1,\; \Delta f_1(d)=E_1d,\\
	&f_2(\x_2)=A_2\x_2,\;g_2(\x_2)=B_2,\;\Delta f_2(\nu r, a_L)=E_2,
\end{align*}
with
\begin{align*}
	A_1(v_f)&=\left(  \begin{array}{cccc} 0 & 1 & \color{black}{v_f} & 0 \\
		0 & -\frac{ C_{f} + C_{r} }{ m\color{black}{v_f} } & 0 & \frac{b C_{r} - a C_{f} }{ m\color{black}{v_f} } - \color{black}{v_f} \\
		0 & 0 & 0 & 1  \\
		0 & \frac{b C_{r} - a C_{f} }{ I_{z} \color{black}{v_f} } & 0 & -\frac{a^2 C_{f} + b^2 C_{r} }{ I_{z} \color{black}{v_f} }
	\end{array} \right), \\
	B_1 &= \left(  \begin{array}{c} 0 \\ \frac{C_{f} }{m} \\ 0 \\ \frac{aC_{f} }{I_{z}} \end{array} \right), \quad E_1 = \left(  \begin{array}{c} 0 \\ 0 \\ -1 \\ 0 \end{array} \right),
\end{align*}
\begin{align*}
	A_2&=\left(  \begin{array}{ccc} 0 & 0 & 0 \\
		0 & 0& 0  \\
		-1 & 1 & 0
	\end{array} \right), B_2 = \left(  \begin{array}{c} \frac{1}{m} \\0 \\ 0 \end{array} \right),\\
	E_2 &= \left(  \begin{array}{c} -\nu r-\frac{F_r}{m}  \\ a_L\\ 0 \end{array} \right).
\end{align*}

It is supposed that a bound is imposed on the steering angle $\delta_f$ of the controlled vehicle, that is, the set of admissible inputs for LK is
\begin{align}
	U_{lk}:=[- \hat \delta_f,\hat \delta_f ],\label{angleconstraint}
\end{align}
where $\hat \delta_f >0$ is the maximum steering angle. At highway speeds, this number will be much smaller than the maximum turning angle of the vehicle, say two or three degrees, versus 35 degrees.

It is also supposed that the acceleration/deceleration of the controlled car is bounded, that is, the set of admissible inputs for ACC is
\begin{align}
	& U_{acc}:=[-a_fmg, a_f'mg],\label{forceconstraint}
\end{align}
where $a_f,a_f'>0$. Typical bounds for driver comfort would be two or three tenths of gravitational acceleration. {\color{black} Note that $-a_fg$ is normally much less than the maximal deceleration capability of the car. Additionally, it is supposed that the deceleration/acceleration of the lead car is bounded. That is, $a_L\in[-a_lg,a_l'g]$ for some $a_l,a_l'>0$, which can be also expressed as $a_L\in\D_{a_L}$ where
	\begin{align}
		\D_{a_L}:=\{a\in\R |(a +a_lg)(a_l'g-a)\ge 0\}.
	\end{align}		
	The lead and controlled vehicles may have different allowable deceleration capabilities, i.e., $a_l$ and $a_f$ need not be equal.
}


\subsection{Specifications}
\label{sec:specifications}

{\color{black} Specifications for LK and ACC are given in this subsection. Among these specifications,
	the safety specifications are ``hard constraints'' that must be satisfied for all time, while the  performance objectives are ``soft constraints'' that can be overridden when they are in conflict with safety.}


{\bf LK Specifications.} The primary safety constraint of LK is to keep the car within its lane. That is, the absolute value of the lateral displacement $y$ is less than some given constant $y_{m}$, which is related to the width of the lane and the car. Specifically, this specification is expressed as
\begin{align}
	\label{eqn:LK_constraint}
	|y|\leq y_{m},
\end{align}
where $y_{m}$ is a given positive real number.

In addition to the lateral displacement, the other state variables should also be bounded. These bounds are stated as the following specifications:
\begin{align}
	\label{eqn:LK_constraint2}
	|\nu|\leq \nu_{m},\;|\Delta\psi|\leq \Delta\psi_{m},\;|r|\leq r_{m},
\end{align}
where $\nu_{m}$, $\Delta\psi_{m}$, $r_{m}$ are given positive real numbers.

A soft constraint for LK is for the vehicle's yaw rate $r(t)$ to track $d(t)$, the yaw or turning rate of the road, which is expressed  as
\begin{align}\label{LK_soft}
	& \lim_{t\rightarrow \infty} r(t) - d(t) = 0.
\end{align}
Another optional soft constraint is for the lateral acceleration to be upper bounded by a number that respects driver comfort, e.g., $|\dot{\nu}|\le 0.25g$.

We define the set $\mathcal{X}_{LK}$ as
\begin{align}
	&\mathcal{X}_{LK}:=\{\x_1\in\R^4||y|\le y_{m},|\nu|\le \nu_{m},\nonumber\\
	&\hskip 30mm|\Delta\psi|\le \Delta\psi_{m}, |r|\le r_{m} \}.\label{DX}
\end{align}
In what follows, we assume that $|d|\leq d_{\max}$ for some given $d_{\max}>0$, that is, $d\in\D_d$ where
\begin{align}
	\D_{d}=\{d\in\R | d^2_{\max} - d^2 \ge 0 \}.\label{Dd}
\end{align}

{\bf ACC Specifications.} {\color{black}The primary constraint for ACC is that the controlled vehicle \emph{maintain a safe distance from the lead car}. There are numerous formulations of this safety concept such as Time Headway and Time to Collision. In this paper, we use the following hard constraint  \cite{Vogel}:}
\begin{align}
	\label{eqn:ACC_constraint}
	D\geq \tau_{d}v_f +D_0,
\end{align}
where $\tau_{d}$ is the desired time headway
and $D_0$ is the minimal distance between cars when they are fully stopped.

The soft constraint for ACC, which is the performance objective of  the controlled car, is to achieve a desired speed $v_d$ set by the driver. This specification can be expressed as
\begin{align}
	\label{eqn:ACCobj}
	\lim_{t \to \infty} v_f(t)-v_d=0,
\end{align}
for a given positive constant $v_d$. {\color{black} Clearly, if the lead car's speed is less than $v_d$, then \eqref{eqn:ACCobj} cannot be achieved. In our control formulation, this is automatically taken into account without the need for if-then-else statements defining various modes of operation.}



\subsection{Formulating the Composition Problem}\label{subsec:problem}
The correctness guarantee for the composition of LK and ACC is formulated as the following problem.

{\color{black}{\it Given LK model \eqref{PVLK} and ACC model \eqref{PVACC}, find  feedback controllers $u_1\in U_{lk}$ and $u_2\in U_{acc}$ such that for any $d\in\D_d$ and $a_L\in\D_{a_L}$, the hard constraints \eqref{eqn:LK_constraint}, \eqref{eqn:LK_constraint2} and \eqref{eqn:ACC_constraint} are always satisfied, and the soft constraints \eqref{LK_soft} and  \eqref{eqn:ACCobj} are achieved  when they are not
		in conflict with the hard constraints.} }


{\color{black}
	Although the safety specifications for LK and ACC are given separately, the dynamics of LK and ACC interact with each other because $A_1(v_f)$ depends on $v_f$  and $\Delta f_2(\nu r, a_L)$ depends on the product of lateral velocity and yaw rate through the term $\nu r$. Furthermore, the external inputs $d,a_L$ and the assumption that $u_1,u_2$ are bounded make the compositional problem harder to slove (see Figure \ref{fig:com}).
	\begin{figure}[!htb]
		\centering
		\includegraphics[width=9cm]{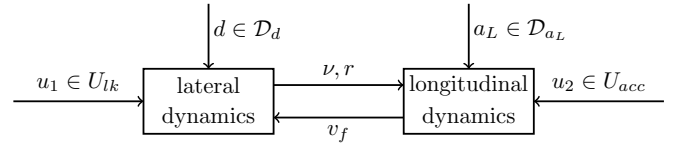}
		\caption{Interconnection of the lateral and longitudinal dynamics.}\label{fig:com}
	\end{figure}
}

\subsection{A Contract Between LK and ACC}\label{sub:contract}
{\color{black}One approach for the compositional design of complex systems is the \emph{contract-based method}, which formally defines a set of assume-guarantee protocols among subsystems in a ``circular'' manner~\cite{sangiovanni2012taming,benvenuti2008contract}.
	The basic idea is that each subsystem ensures that its behavior (i.e., set of trajectories) satisfies its own guarantees, under the assumption that all the other subsystems do the same, so that a formal guarantee on the behavior of the overall system can be established.
	For the composition of LK and ACC, we provide assumptions and guarantees between them in terms of the bounds of their respective coupling variables.

	Let $\bar v,\underline{v}>0$ be the upper and lower bound of $v_f$, respectively, where $\bar v\geq v_d$.
	Define a set $\D_{v_f}$ as
	\begin{align}
		\D_{v_f}&=\{v_f\in\R| (\bar{v} - v_f)(v_f - \underline{v})\ge 0\}.\label{Dvf}
	\end{align}
	The contract between the LK and the ACC subsystems is given as follows:
	\emph{The LK subsystem assumes that $v_f \in\D_{v_f}$ and guarantees that $\x_1\in\mathcal{X}_{LK}$ for any $d\in\mathcal{D}_d$, while the ACC subsystem assumes that $|\nu r|\le \nu_m r_m$ and guarantees that $v_f\in\D_{v_f}$ for any $a_L\in\mathcal{D}_{a_L}$.}

	The given contract allows us to design the controller for LK and ACC separately. As long as the individual specifications are respected in a formally correct way, the overall closed-loop system will also satisfy all the safety constraints in the compositional problem.
	In Section \ref{sec:barrsyn}, we will construct CBFs for LK and ACC individually, such that the contract will be respected provided that the two subsystems constrain their behaviors in a controlled invariant set corresponding to their individual CBF. In Section \ref{sec:composition}, we  will synthesize a provably correct solution to the composition problem via a QP that unifies CBFs and the performance controllers.}

\section{Control Barrier Functions For LK and ACC}\label{sec:barrsyn}




In this section, we provide CBFs for the LK model \eqref{LKmodel} and the ACC model \eqref{ACCmodel}, respectively.
To this end, we translate the hard constraints of Section~\ref{sec:specifications} into conditions that a CBF must satisfy so that, by the results of Section~\ref{sec:cbf}, the trajectories of the closed-loop system will satisfy the safety portion of the specification.

\subsection{The CBF For LK}


For the LK subsystem, we construct a polynomial function $h_{lk}(\x_1)\in \mathcal{R}_{\alpha}[\x_1]$  that has the following form:
\begin{align}\label{hlktemp}
	h_{lk}(\x_1) = \kappa - \hat h_{lk}(\x_1)
\end{align}
where $\alpha$ is some given positive integer indicating the polynomial degree, $\kappa\in\R$ is some positive number, $\hat h_{lk}(\x_1)\in \mathcal{R}_{\alpha}[\x_1]$ is a polynomial that is nonnegative. Then the safe set for LK is defined as
$$
\C_{lk}:=\{\x_1\in\R^4|h_{lk}(\x_1)\ge 0\}.
$$
Here, $\kappa$ is a variable used to enlarge the volume of $\C_{lk}$, which will be discussed in detail later.


According to the contract in Subsection \ref{sub:contract}, the CBF $h_{lk}(\x_1)$ will be designed to satisfy the following properties:
\begin{align}
	&\mbox{(LK-P1)}\;\C_{lk}:=\{\x_1\in\R^4|h_{lk}(\x_1)\ge 0\} \neq \emptyset,\label{lkcon3}\\
	&\mbox{(LK-P2)}\; \C_{lk}\subset \mathring{\mathcal{X}}_{LK}, \label{lkcon1}\\
	&\mbox{(LK-P3)}\;\forall \x_1\in\C_{lk}, \forall v_f\in\D_{v_f},\forall d\in\D_d,\nonumber\\
	&\; \sup_{u_1 \in U_{lk}}  \left[ L_{f_1+\Delta f_1} h_{lk}(\x_1) + L_{g_1} h_{lk}(\x_1) u_1 +\gamma h_{lk}(\x_1)\right] \geq 0,\label{lkcon2}
\end{align}
where $\gamma>0$ is a given number representing the gain in  \eqref{ineq:ZCBF}.

The property (LK-P1) ensures that the safe set $\C_{lk}$ is non-empty, property (LK-P2) ensures that $\x_1\in \mathcal{X}_{LK}$ as long as $\x_1\in \C_{lk}$, and property (LK-P3) ensures that $h_{lk}$ is a control barrier function for any longitudinal velocities in $\D_{v_f}$ and any desired yaw rates in $\D_d$. Note that properties (LK-P1)-(LK-P3) imply the satisfaction of the hard constraints \eqref{eqn:LK_constraint} and \eqref{eqn:LK_constraint2}.

In summary, $h_{lk}$ with properties (LK-P1)-(LK-P3) guarantees that, for any speed $v_f\in\D_{v_f}$ and desired yaw rate $d\in\D_d$, there exists steering angle $\delta_f \in U_{lk}$ such that the trajectory of the LK system stays in the set $\C_{lk}$ (and thus set $\mathcal{X}_{LK}$) if starting from $\C_{lk}$.
%


{\color{black}Recalling that  the set $\D$ is the region for which the CBF condition holds (cf. Definition \ref{def:barrierfunctions2}), Theorem \ref{thmLK} provides a sufficient condition for the existence of $h_{lk}$ with $\D=\mathcal{X}_{LK}$, which is the key step in translating the properties (LK-P1)-(LK-P3) into a set of sufficient conditions that can then be synthesized by sum-of-squares programs~\cite{parrilo2000thesis,anderson2013robustF18}.
	In fact, we assume that $\D=\mathcal{X}_{LK}$ for now, which simplifies the procedure to construct $h_{lk}$, and we will discuss the case  $\D=\C_{lk}$ later.}


\begin{theorem}\label{thmLK}
	Given the LK model \eqref{PVLK}, the variable bounds  $y_m,\nu_m,\Delta\psi_m,r_m$, the admissible set $U_{lk}$ defined in \eqref{angleconstraint}, a positive definite polynomial $p(\x_1)$ and the sets $\D_{v_f},\D_d$ defined in \eqref{Dvf} and \eqref{Dd}, if there exist $\gamma>0$, $\rho>0$, some positive integer $\alpha$, polynomial $h_{lk}(\x_1)\in\mathcal{R}_{\alpha}[\x_1]$, non-negative polynomials  $s_0,s_1,...,s_4\in\Sigma[\x_1]$ and $s_5,...,s_{10}\in\Sigma[\x_1,d,v_f]$ such that
	\begin{align}
		&h_{lk}(\x_1) - (\rho - p(\x_1))s_0(\x_1)\ge 0,\label{thmcon1}\\
		&(y^2-y_m^2)s_{1} + h_{lk}(\x_1) < 0, \label{thmcon2}\\
		&(\nu^2-\nu_m^2)s_{2} + h_{lk}(\x_1) < 0, \label{thmcon3}\\
		&(\Delta\psi^2-\Delta\psi_m^2)s_{3} + h_{lk}(\x_1) < 0, \label{thmcon4}\\
		&(r^2-r_m^2)s_{4} + h_{lk}(\x_1) < 0, \label{thmcon5}\\
		&\sup_{u_1 \in U_{lk}}  \left[ L_{f_1+\Delta f_1} h_{lk}(\x_1) + L_{g_1} h_{lk}(\x_1) u_1 +\gamma h_{lk}(\x_1)\right] \nonumber\\
		&-(y_m^2-y^2)s_{5}  -(\nu_m^2-\nu^2)s_{6}- (\Delta \psi_m^2-\Delta \psi^2)s_{7} \nonumber\\
		&- (r^m_2-r^2)s_{8} - (d_{\max}^2-d^2)s_{9}- (\bar v-v_f)(v_f-\underline{v})s_{10}  \ge 0,\label{thmcon6}
	\end{align}
	then $h_{lk}(\x_1)$ satisfies properties (LK-P1)-(LK-P3) defined in \eqref{lkcon3}-\eqref{lkcon2}.
\end{theorem}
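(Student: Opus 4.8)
The plan is to verify the three target properties (LK-P1)--(LK-P3) one at a time, in each case reading the corresponding hypothesis among \eqref{thmcon1}--\eqref{thmcon6} as a Positivstellensatz / S-procedure certificate: a sum-of-squares multiplier times a constraint polynomial is sign-definite on the set cut out by that constraint, so the surviving inequality transfers to that set. Because the hypotheses are assumed to hold as global polynomial inequalities, no feasibility of an SOS program needs to be invoked in the proof itself; the only structural fact used is that $s_0,\dots,s_{10}\ge0$ pointwise. For \eqref{lkcon3} I would use \eqref{thmcon1} together with positive definiteness of $p$: since $p(0)=0<\rho$ and $p$ is continuous, the sublevel set $\{\x_1:p(\x_1)\le\rho\}$ is a nonempty neighborhood of the origin, on which $\rho-p(\x_1)\ge0$; as $s_0\ge0$, the product $(\rho-p(\x_1))s_0(\x_1)\ge0$, and \eqref{thmcon1} gives $h_{lk}(\x_1)\ge(\rho-p(\x_1))s_0(\x_1)\ge0$. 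Hence $\{p\le\rho\}\subseteq\C_{lk}$, so $\C_{lk}\neq\emptyset$.

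For (LK-P2) I would argue coordinate by coordinate from \eqref{thmcon2}--\eqref{thmcon5} by contraposition. Fix $\x_1\in\C_{lk}$, so $h_{lk}(\x_1)\ge0$. Were $s_1(\x_1)=0$, then \eqref{thmcon2} would read $h_{lk}(\x_1)<0$, contradicting $\x_1\in\C_{lk}$; hence $s_1(\x_1)>0$. Then \eqref{thmcon2} gives $(y^2-y_m^2)s_1(\x_1)<-h_{lk}(\x_1)\le0$, and dividing by $s_1(\x_1)>0$ yields $y^2-y_m^2<0$, i.e.\ $|y|<y_m$. The identical argument applied to \eqref{thmcon3}--\eqref{thmcon5} gives $|\nu|<\nu_m$, $|\Delta\psi|<\Delta\psi_m$, and $|r|<r_m$. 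Thus every point of $\C_{lk}$ lies in the open box defining $\mathring{\mathcal{X}}_{LK}$, establishing $\C_{lk}\subset\mathring{\mathcal{X}}_{LK}$. The strictness of the inequalities in \eqref{thmcon2}--\eqref{thmcon5} is precisely what upgrades the conclusion from $\mathcal{X}_{LK}$ to its interior.

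For (LK-P3) I would read \eqref{thmcon6} over the set $\mathcal{X}_{LK}\times\D_d\times\D_{v_f}$. On this set each bracket factor $y_m^2-y^2$, $\nu_m^2-\nu^2$, $\Delta\psi_m^2-\Delta\psi^2$, $r_m^2-r^2$, $d_{\max}^2-d^2$, and $(\bar v-v_f)(v_f-\underline{v})$ is nonnegative, and since $s_5,\dots,s_{10}\ge0$ every subtracted term in \eqref{thmcon6} is nonnegative there; therefore the supremum expression is $\ge0$ for all $(\x_1,v_f,d)\in\mathcal{X}_{LK}\times\D_d\times\D_{v_f}$. By (LK-P2) we have $\C_{lk}\subset\mathcal{X}_{LK}$, so the CBF inequality in particular holds for all $\x_1\in\C_{lk}$, $v_f\in\D_{v_f}$, $d\in\D_d$, which is exactly \eqref{lkcon2}. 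This is also where the stated simplification $\D=\mathcal{X}_{LK}$ is used: certifying the condition on the larger box $\mathcal{X}_{LK}$ is sufficient because (LK-P2) confines $\C_{lk}$ inside it.

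The point requiring the most care is the supremum over $u_1\in U_{lk}$ in \eqref{thmcon6}. Since $\sup_{u_1\in U_{lk}}[L_{f_1+\Delta f_1}h_{lk}+L_{g_1}h_{lk}u_1+\gamma h_{lk}]=L_{f_1+\Delta f_1}h_{lk}+\gamma h_{lk}+\hat\delta_f|L_{g_1}h_{lk}|$ is not a polynomial (the input bound enters through an absolute value, so the condition ``$\sup\ge0$'' is really a disjunction over the two endpoints $u_1=\pm\hat\delta_f$), \eqref{thmcon6} cannot be passed to an SOS solver verbatim. For the logical implication asserted by the theorem this is harmless, as \eqref{thmcon6} is assumed to hold as written; but it is the step at which the certificate meets its polynomial representation, and in practice one would split on the sign of $L_{g_1}h_{lk}$ into two polynomial inequalities (in the spirit of the piecewise construction behind Proposition~\ref{thm:gluebarrier}). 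I would therefore scrutinize this step most closely when turning the certificate into a solvable program.
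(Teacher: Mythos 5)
Your proposal is correct and follows essentially the same route as the paper's proof: \eqref{thmcon1} plus positive definiteness of $p$ gives $\{\x_1 : p(\x_1)\le\rho\}\subseteq\C_{lk}$ for (LK-P1), the S-procedure (which you spell out explicitly via the contraposition on $s_i(\x_1)=0$) gives (LK-P2) from \eqref{thmcon2}--\eqref{thmcon5}, and nonnegativity of the subtracted multiplier terms on $\mathcal{X}_{LK}\times\D_d\times\D_{v_f}$ gives (LK-P3) from \eqref{thmcon6}, using $\C_{lk}\subset\mathcal{X}_{LK}$. Your closing remark about the supremum over $u_1$ not being a polynomial concerns only the downstream SOS implementation (handled in the paper by fixing an explicit controller in $(\mathcal{P}_0)$--$(\mathcal{P}_1)$), not the validity of the theorem's implication, so it is not a gap.
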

\begin{proof}
	Condition \eqref{thmcon1} implies (LK-P1) because $h_{lk}(\x_1)\ge 0$ whenever $p(\x_1)\le \rho$, which means that
	$\{\x_1|p(\x_1)\le \rho\}\subseteq \C_{lk}$ and therefore $\C_{lk}\neq \emptyset$.
	Based on the S-procedure, conditions \eqref{thmcon2}-\eqref{thmcon5} imply (LK-P2) because $|y|< y_m$, $|\nu|< \nu_m$, $|\Delta\psi|< \Delta\psi_m$ and $|r|< r_m$ whenever $h_{lk}(\x_1)\ge 0$. Condition \eqref{thmcon6} implies condition (LK-P3), since \eqref{lkcon2} holds whenever $\x_1\in\mathcal{X}_{LK},v_f\in\D_{v_f},d\in\D_d$.
\end{proof}



In addition to the properties (LK-P1)-(LK-P3), it is desirable for $h_{lk}$ to maximize the volume of $\C_{lk}$, the portion of the safe set that the CBF renders controlled invariant. To this end, we normalize the coefficients of $\hat h_{lk}$ by adding the constraint $\hat h_{lk}((1,...,1)^\top)=1$ (i.e., the sum of the coefficients is equal to 1) and maximizing $\kappa$. Note that the normalization is necessary to make the maximization of  $\kappa$ valid, since otherwise the coefficients of $\hat h_{lk}$ can be scaled accordingly with $\kappa$ that results in the same $\C_{lk}$, which makes maximizing $\kappa$ have no meaning.
The normalization method has also been used in finding the maximal region of attraction of a control system using Lyapunov function and SOS \cite{ani2013alongtraj,tanphdthesis}.

The terms $\rho s_0$ in \eqref{thmcon1} and $\gamma h_{lk}$ in \eqref{thmcon6} involve products of the unknowns, but it can be turned into linear constraints on the unknowns through bisecting $\rho$ and $\gamma$. Moreover, the maximization over the allowed steering angles in  \eqref{thmcon6}  also involves a product of the unknowns, which can be overcome by finding an explicit formula for the controller and iterating between improving the current control solution and the CBF.

In what follows, we explain in detail the iterative procedure to construct $h_{lk}$, which satisfies \eqref{thmcon1}-\eqref{thmcon6} and maximizes the volume of  $\C_{lk}$,  using SOS programs.




\emph{1.Initialization.}\; {\color{black}
	The iteration process is initialized by first computing an LQR controller for a nominal value of $v_f\in \D_{v_f}$ and $d=0$. More specifically, we first choose weight matrices $Q\in\R^{4\times 4}$ penalizing the state and $R\in\R$ penalizing the input, where look-ahead of the road curvature can be taken into account in $Q$~\cite{GerdLanePotential06}. Then we solve for the LQR gain $K\in\R^{1\times 4}$ for the system $(A(v_f),B)$ with a given and fixed $v_f\in \D_{v_f}$.} Based on the gain $K$, we fix the control $u_1=\frac{-K\x_1}{1+\eta(K\x_1)^2}$ and solve for $h_{lk}$ using a feasibility SOS program, with $\eta$ selected as $\eta=1/(2\hat \delta_f)^2$. This value implies that $\lvert\frac{-K\x_1}{1+\eta(K\x_1)^2}\rvert\le \hat \delta_f$ and therefore $u_1\in U_{lk}$.

\begin{remark}
	The motivation for scaling the LQR control $-K\x_1$ by $1+\eta(K\x_1)^2$ includes: (i) an LQR controller gain $K$ is easily computed; (ii) {\color{black}the value function of the LQR controller corresponds to a quadratic form}, and  there always exists a sufficiently small sublevel set of the quadratic form contained in ${\mathcal{X}}_{LK}$, for which the values of the controller over that sublevel set lie in $U_{lk}$; and (iii), while there is no guarantee that the controller $u_1$ computed in this manner will result in a feasible SOS program (i.e., the following $(\mathcal{P}_0)$) for all $v_f\in \D_{v_f}$ and $d\in \D_{d}$, this has not been a problem in the examples we have worked when $\rho$ is sufficiently small. Of course, the user is free to use other control synthesis methods to initiate the iteration process. It is also a choice whether or not to feed forward the desired yaw rate.
\end{remark}

Given an LQR gain $K$, we choose values for $\gamma>0$ and sufficiently small $\rho_0>0,\epsilon>0$ and use the following feasibility SOS program for initialization.



\begin{align}
	&(\mathcal{P}_0):\nonumber\\
	&\mbox{find}\;h_{lk}\in\mathcal{R}_{\alpha}[\x_1],s_0,s_1,...,s_{4} \in\Sigma[\x_1],s_5,...,s_{10}\in\Sigma[\x_1,d,v_f] \nonumber\\
	&\mbox{such that}\nonumber\\
	&h_{lk} - (\rho_0 - p)s_0\in\Sigma[\x_1], \label{c0}\\
	&-h_{lk}-(y^2-y_m^2)s_{1}-\ep\in\Sigma[\x_1],\label{c1}\\
	&-h_{lk}-(\nu^2-\nu_m^2)s_{2}-\ep\in\Sigma[\x_1],\label{c2}\\
	&-h_{lk}-(\Delta \psi^2-\Delta \psi_m^2)s_{3}-\ep\in\Sigma[\x_1],\label{c3}\\
	&-h_{lk}-(r^2-r_m^2)s_{4}-\ep\in\Sigma[\x_1],\label{c4}\\
	&\frac{\partial h_{lk}}{\partial \x_1}[A(v_f)\x_1+Ed][1+\eta(K\x_1)^2]v_f+\frac{\partial h_{lk}}{\partial \x_1}B(-K\x_1)v_f\nonumber\\
	&\quad+\gamma h_{lk}[1+\eta(K\x_1)^2]v_f - (y_m^2-y^2)s_{5} -(\nu_m^2-\nu^2)s_{6}\nonumber\\
	&\quad- (\Delta \psi_m^2-\Delta \psi^2)s_{7} - (r^2_m-r^2)s_{8} - (d_{\max}^2-d^2)s_{9} \nonumber\\
	&\quad- (\bar v-v_f)(v_f-\underline{v})s_{10} \in\Sigma[\x_1,d,v_f],\label{c5}
\end{align}
where $\eta=1/(2\hat \delta_f)^2$.

%


Note that \eqref{c0} implies (LK-P1), \eqref{c1}-\eqref{c4} imply (LK-P2) based on the S-procedure, and  \eqref{c5} implies that for any $\x_1\in\mathcal{X}_{LK}$, $v_f\in\D_{v_f}$,  $d\in\D_d$,
\begin{align}
	\frac{\partial h_{lk}}{\partial \x_1}[A(v_f)\x_1+B\frac{-K\x_1}{1+\eta(K\x_1)^2}+Ed]+\gamma h_{lk}\ge 0.\label{LKini}
\end{align}

Thus, \eqref{LKini} means that the control $u_1=\frac{-K\x_1}{1+\eta(K\x_1)^2}\in U_{lk}$ results in the CBF condition $\dot{h}_{lk}(\x_1,v_f,d,u_1)+\gamma h_{lk}(\x_1)\ge 0$ holding.
Because $A(v_f)$ is a rational matrix with the denominator $v_f$ in some of its entries, it needs to be multiplied with $v_f$ so that it becomes a polynomial \cite{anderson2013robustF18}. Note that in $(\mathcal{P}_0)$ and in what follows, the degrees of the multipliers $s_i$ are not specified explicitly and assumed to be chosen appropriately.


If $(\mathcal{P}_0)$ is infeasible, we repeat it by modifying parameters $Q,R,\gamma,\rho_0,\epsilon$ and increasing $\alpha$; otherwise, $h_{lk}$ is obtained as a polynomial that satisfies properties (LK-P1)-(LK-P3). Then, the following two steps will be used to find a polynomial $h_{lk}$ that increases the volume of $\C_{lk}$.




\emph{2.Synthesize Controller.}\;Given the polynomial $h_{lk}$ from the initialization step, which is denoted as $h_{lk}^{old}$ in the subsequent $(\mathcal{P}_1)$, we use the following maximization SOS program to find a new controller $u\in\mathcal{R}_{\beta}[\x_1,d,v_f]$  with some positive integer $\beta$, $\kappa\in\R$ and multipliers $s_i(0\le i\le 22)$, such that  $u\in U_{lk}$, $\kappa$ is maximized and for any $\x_1\in\mathcal{X}_{LK},v_f\in\D_{v_f},d\in\D_d$, the CBF condition \eqref{lkcon2} holds.

\begin{align}
	&(\mathcal{P}_1):\nonumber\\
	&\max \;\kappa\;\nonumber\\
	&\;\mbox{over}\;\kappa\in\R,u\in\mathcal{R}_{\beta}[\x_1,d,v_f],s_{0},s_1,...,s_{4} \in\Sigma[\x_1],\nonumber\\
	&\;s_5,...,s_{22}\in\Sigma[\x_1,d,v_f],\;\mbox{such that}\nonumber\\
	& h_{lk} - h_{lk}^{old}s_0\in\Sigma[\x_1], \label{cc0}\\
	&\;\eqref{c1}-\eqref{c4}\;\mbox{hold},\nonumber\\
	&\frac{\partial h_{lk}}{\partial \x_1}[A(v_f)\x_1+Bu(\x_1,d,v_f) +Ed]v_f+\gamma h_{lk}v_f  \nonumber\\
	& - (y_m^2-y^2)s_{5} -(\nu_m^2-\nu^2)s_{6} - (\Delta \psi_m^2-\Delta \psi^2)s_{7} \nonumber\\
	& - (r^2_m-r^2)s_{8}  - (\bar v-v_f)(v_f-\underline{v})s_{9}   \nonumber \\
	& - (d_{\max}^2-d^2)s_{10}  \in\Sigma[\x_1,d,v_f], \label{c8}\\
	& u(\x_1,d,v_f)+\hat\delta_f - (y_m^2-y^2)s_{11} - (\nu_m^2-\nu^2)s_{12}\nonumber\\
	& - (\Delta \psi_m^2-\Delta \psi^2)s_{13} - (r^2_m-r^2)s_{14} - (d_{\max}^2-d^2)s_{15} \nonumber\\
	&\quad- (\bar v-v_f)(v_f-\underline{v})s_{16} \in\Sigma[\x_1,d,v_f],\label{c6}\\
	& -u(\x_1,d,v_f)+\hat\delta_f - (y_m^2-y^2)s_{17} - (\nu_m^2-\nu^2)s_{18}\nonumber\\
	& - (\Delta \psi_m^2-\Delta \psi^2)s_{19} - (r^2_m-r^2)s_{20} - (d_{\max}^2-d^2)s_{21} \nonumber\\
	&\quad- (\bar v-v_f)(v_f-\underline{v})s_{22} \in\Sigma[\x_1,d,v_f].\label{c7}
\end{align}




Condition \eqref{c8} means that with such $u$, the CBF condition $\dot{h}_{lk}(\x_1,v_f,d,u)+\gamma h_{lk}(\x_1)\ge 0$ holds, for any $\x_1\in\mathcal{X}_{lk},d\in\D_d,v_f\in\D_{v_f}$.
Conditions \eqref{c6}-\eqref{c7} mean that the synthesized $u\in\mathcal{R}_{\beta}[\x_1,d,v_f]$ satisfies $|u|\le \hat \delta_f$, which implies that $u\in U_{lk}$, for any $\x_1\in\C_{lk},d\in\D_d,v_f\in\D_{v_f}$.


When the procedure goes from $(\mathcal{P}_0)$ to $(\mathcal{P}_1)$, which will happen only once, it is not guaranteed that $(\mathcal{P}_1)$ will be feasible.  In case of infeasibility,  we can increase the degree of $u$ and repeat $(\mathcal{P}_1)$. However, in all examples we have worked, $(\mathcal{P}_1)$ has been feasible, even when $u$ is chosen to be of degree two. On the other hand, $(\mathcal{P}_1)$ will always be feasible when executed after $(\mathcal{P}_2)$, another SOS program that will be discussed shortly. {\color{black} Therefore, we assume that $(\mathcal{P}_1)$ is feasible at initialization and proceed.}

\begin{remark}\label{rem:P1}
	If we choose the controller in $(\mathcal{P}_1)$ to be a rational function in the form of $\tilde K_1 \x_1/(1+\x_1^\top \tilde K_2 \x_1)$ where $\tilde K_1 \in \R^{1\times 4}$, $\tilde K_2\in \R^{4\times 4}$ positive definite, then $(\mathcal{P}_1)$ is guaranteed to be feasible because \eqref{LKini} holds and $u_1$ in $(\mathcal{P}_0)$ is a rational polynomial function of the same form. Higher order terms can also be included in the numerator/denominator of the rational function. The following algorithms remain true after appropriate modifications, if the rational functions template are used for the controller. This also validates the above assumption that $(\mathcal{P}_1)$ is always feasible. 
\end{remark}

\emph{3.Synthesize Barrier.}\; Given the controller $u(\x_1,d,v_f)$ and the CBF $h_{lk}$ from $(\mathcal{P}_1)$, which will be denoted as $h_{lk}^{old}$ in the subsequent $(\mathcal{P}_2)$, the following SOS program finds a new CBF $h_{lk}$ and multipliers $s_0,s_1,...,s_{10}$ to maximize $\kappa$.


\begin{align}
	&(\mathcal{P}_2):\nonumber\\
	&\max \;\kappa\;\nonumber\\
	&\;\mbox{over}\;\kappa\in\R,\hat h_{lk}\in\mathcal{R}_{\alpha}[\x_1],s_0,s_1,..., s_{4}\in\Sigma[\x_1],\nonumber\\
	&\quad \quad \;s_{5},...,s_{10}\in\Sigma[\x_1,d,v_f],\mbox{such that}\nonumber\\
	&\eqref{c1}-\eqref{c4},\eqref{cc0}\;\mbox{and}\;\eqref{c8}\;\mbox{hold}.\nonumber
\end{align}

Note that $(\mathcal{P}_2)$ is always feasible since $u$ and $\kappa$ in $(\mathcal{P}_1)$ constitute a feasible solution, and the resulting $h_{lk}$ is a polynomial satisfying \eqref{thmcon1}-\eqref{thmcon6} and therefore properties (LK-P1)-(LK-P3).

With the new CBF $h_{lk}$ constructed, we return to Step 2 to continue the iterative procedure until convergence. Because  $\mathcal{X}_{LK}$ is a compact set and the constructed set $\C_{lk}$ in each step of $(\mathcal{P}_1)$-$(\mathcal{P}_2)$ is no smaller than in the previous step, asymptotic convergence is guaranteed. In practice, we can either terminate the algorithm when the change of $\kappa$ is below some threshold, or simply set in advance the number of iterations. Furthermore, when we return to $(\mathcal{P}_1)$, it is guaranteed to be feasible since $u$ in the last step, i.e., $(\mathcal{P}_2)$, is a feasible solution.






Algorithm \ref{algo1} and Proposition \ref{prp:algoCBF} summarize the above results.


\begin{algorithm}[!hbt]
	\caption{Synthesis of Control Barrier Functions for LK}
	\begin{algorithmic}[1]\label{algo1}
		\REQUIRE $y_m,\nu_m,\Delta\psi_m,r_m,\hat\delta_f,d_{\max},\bar v,\underline{v},Q,R,\gamma,\ep,\rho_0,p,\alpha,\beta$
		\ENSURE $\kappa,\hat h_{lk}(\x_1),u(\x_1,d,v_f)$
		\STATE Solve for the LQR gain $K$ and solve $(\mathcal{P}_0)$
		\WHILE {$(\mathcal{P}_0)$ is not feasible}
		\STATE {Modify $Q,R,\gamma,\rho_0$ and solve $(\mathcal{P}_0)$  }
		\ENDWHILE
		\STATE converged = false
		\WHILE { $\neg$ converged}
		\STATE Fix $\hat h_{lk}$, find $u,s_i,\kappa$ and maximize $\kappa$ by solving $(\mathcal{P}_1)$
		\STATE	Fix $u$, find $\hat h_{lk},s_i,\kappa$ and maximize $\kappa$  by solving $(\mathcal{P}_2)$
		\IF {$|\kappa^{new}-\kappa^{old}|\le$ some threshold}
		\STATE converged = true
		\ENDIF
		\ENDWHILE
	\end{algorithmic}
\end{algorithm}


\begin{proposition}\label{prp:algoCBF}
	If $(\mathcal{P}_0)$ is feasible, then Algorithm \ref{algo1} terminates and the polynomial $h_{lk}(\x_1)$ returned by it satisfies properties (LK-P1)-(LK-P3). 
\end{proposition}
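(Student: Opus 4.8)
\emph{Overview.} The plan is to establish the two claims separately: (i) the barrier $h_{lk}$ returned at termination --- indeed every iterate --- satisfies (LK-P1)--(LK-P3); and (ii) the main \textbf{while} loop exits after finitely many passes (the first \textbf{while} loop is skipped, since $(\mathcal{P}_0)$ is assumed feasible). For (i) I would set up a loop invariant, showing that the output of $(\mathcal{P}_0)$ and of each $(\mathcal{P}_2)$ satisfies (LK-P1)--(LK-P3) by an argument that mirrors the proof of Theorem~\ref{thmLK}. For (ii) I would show that the optimal value $\kappa$ is monotone non-decreasing and bounded above, hence convergent.

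\emph{Correctness.} The base case is the feasible point returned by $(\mathcal{P}_0)$: \eqref{c0} yields (LK-P1), \eqref{c1}--\eqref{c4} yield $\C_{lk}\subset\mathring{\mathcal{X}}_{LK}$, i.e.\ (LK-P2), by the S-procedure, and \eqref{c5} --- which is the CBF inequality scaled by $v_f$ --- certifies the admissible controller $u_1=-K\x_1/(1+\eta(K\x_1)^2)\in U_{lk}$, giving (LK-P3) after dividing by $v_f>0$ (legitimate since $v_f\ge\underline v>0$ on $\D_{v_f}$). For the inductive step, the output of any $(\mathcal{P}_2)$ satisfies \eqref{c1}--\eqref{c4}, \eqref{cc0} and \eqref{c8}. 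Conditions \eqref{c1}--\eqref{c4} again give (LK-P2). Condition \eqref{cc0} gives $h_{lk}\ge h_{lk}^{old}s_0\ge 0$ wherever $h_{lk}^{old}\ge 0$, so the previous safe set is nested in the new one; as the previous $\C_{lk}$ is nonempty by the inductive hypothesis, so is the new one, giving (LK-P1). Finally \eqref{c8} is the CBF inequality scaled by $v_f$ with the fixed controller $u$ inserted, and the preceding $(\mathcal{P}_1)$ certified $u\in U_{lk}$ via \eqref{c6}--\eqref{c7}; hence the supremum in \eqref{lkcon2} is bounded below by the value at $u$, which is $\ge 0$, establishing (LK-P3). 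Since the returned $h_{lk}$ is the last $(\mathcal{P}_2)$ output, it satisfies (LK-P1)--(LK-P3).

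\emph{Termination.} The engine here is feasibility nesting. Passing from the $(\mathcal{P}_2)$ of one pass to the $(\mathcal{P}_1)$ of the next, the incumbent $(u,\kappa)$ stays feasible: with multiplier $s_0\equiv 1$ the constraint \eqref{cc0} collapses to $\kappa\ge\kappa^{old}$, constraints \eqref{c1}--\eqref{c4} are unchanged, \eqref{c8} held in the previous $(\mathcal{P}_2)$, and \eqref{c6}--\eqref{c7} held when $u$ was synthesized; symmetrically, the $(\mathcal{P}_1)$ solution is feasible for the ensuing $(\mathcal{P}_2)$. Thus each program's maximum can only be larger, so $\{\kappa\}$ is monotone non-decreasing and, equivalently by \eqref{cc0}, the sets $\C_{lk}$ are nested. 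Since every $\C_{lk}\subset\mathring{\mathcal{X}}_{LK}$ with $\mathcal{X}_{LK}$ compact, this nested family cannot grow indefinitely, which caps $\kappa$ from above; a bounded monotone sequence converges, so $|\kappa^{new}-\kappa^{old}|\to 0$ and the stopping test of Algorithm~\ref{algo1} is met in finitely many iterations. Under the alternative preset-iteration budget, termination is immediate.

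\emph{Main obstacle.} I expect the delicate point to be the upper bound on $\kappa$. Monotonicity is clean once the cross-program feasibility is verified, but converting ``$\C_{lk}$ remains inside the compact set $\mathcal{X}_{LK}$'' into a true scalar bound on $\kappa$ is subtle, because the normalized $\hat h_{lk}$ changes shape across iterations and a single coefficient normalization need not bound its values on $\mathcal{X}_{LK}$. I would therefore either (a) use the monotone, bounded \emph{volume} of $\C_{lk}$ as the convergence certificate and invoke the preset-iteration rule for strict finite termination, or (b) extract the bound directly from the S-procedure constraints \eqref{c1}--\eqref{c4} restricted to $\partial\mathcal{X}_{LK}$. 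A secondary point to nail down is that feasibility, once attained at $(\mathcal{P}_0)$ together with the one-time assumption that the first $(\mathcal{P}_1)$ is feasible --- guaranteed under the rational-controller template of Remark~\ref{rem:P1} --- is never subsequently lost, which is precisely what the nesting of feasible solutions delivers.
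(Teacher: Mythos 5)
Your proposal is correct and takes essentially the same route as the paper's own argument (which appears as the discussion surrounding Algorithm~\ref{algo1} rather than a formal proof environment): Theorem~\ref{thmLK}-style verification of (LK-P1)--(LK-P3) for the $(\mathcal{P}_0)$ and $(\mathcal{P}_2)$ iterates, feasibility nesting between $(\mathcal{P}_1)$ and $(\mathcal{P}_2)$ (including the one-time assumption, backed by Remark~\ref{rem:P1}, that the first $(\mathcal{P}_1)$ is feasible), and monotone growth of $\kappa$ and $\C_{lk}$ bounded by the compact set $\mathcal{X}_{LK}$ to conclude convergence and hence termination under the threshold test. The obstacle you flag---converting containment in $\mathcal{X}_{LK}$ into a genuine scalar bound on the normalized $\kappa$---is real but is glossed over by the paper as well, so your treatment is if anything slightly more careful than the original.
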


%




\begin{remark}
	There are no efficient and reliable solvers for semi-definite programs with bilinear constraints in the decision variables, which are non-convex and known to be NP-hard in general. Iterative procedures have therefore been commonly used to bypass the bilinear constraints for SOS programs; for instance, they were used to search for control Lyapunov functions in \cite{tan2004searching} and to construct an invariant funnel along trajectories in \cite{ani2013alongtraj}. However, in contrast to the cited results, the particular controller constructed in Algorithm \ref{algo1} is not important to us since it will not be implemented directly on the system; indeed, the actual control input will be generated by solving a quadratic program that will be explained in Section \ref{sec:composition}. In fact, it is the CBF $h_{lk}$ that is crucial to us, because it characterizes the safe set $\C_{lk}$ that can be rendered controlled invariant using input values selected from $U_{lk}$. These observations allow us to focus on the construction of the CBFs instead of the control law (recall the discussion in Remark \ref{rem:P1} about the flexible form of the controller).
\end{remark}





By fixing $h_{lk}$ and $u$ obtained from Algorithm \ref{algo1}, we can further maximize $\gamma$ by solving the following SOS program:
\begin{align}
	&\max \;\gamma\;\nonumber\\
	&\mbox{over}\;\gamma\in\R,s_0,..., s_{4}\in\Sigma[\x_1],s_{5},...,s_{10}\in\Sigma[\x_1,d,v_f]\nonumber\\
	&\mbox{such that}\;\eqref{c8}\;\mbox{holds}.\nonumber
\end{align}
With the maximal $\gamma$, we obtain the maximal allowable input set $\Kzcbf(x)$ (cf. Definition \ref{zcbfinputset}) w.r.t. the CBF $h_{lk}$, from which the input can render the set $\C_{lk}$ controlled invariant under the dynamics of the LK system.




%

\begin{remark}\label{rem:setD}
	Since the CBF  $h_{lk}$ satisfies $\dot{h}_{lk}+\gamma h_{lk}\ge 0$ in $\mathcal{X}_{LK}$,  the set $\C_{lk}$ is asymptotically stable in $\mathcal{X}_{LK}$ under a control law taking values from $\Kzcbf(x)$. Therefore, we can take into account affine disturbances in \eqref{LKmodel} similar to the argument in \cite{Xu2015ADHS}, {\color{black}by which it can be shown that the LK system is input-to-state stable with respect to the disturbances and a larger controlled invariant set containing $\C_{lk}$ can be quantitatively given.}
\end{remark}

The argument above shows that, if we choose $\D=\mathcal{X}_{LK}$, then $\C_{lk}$ is controlled invariant and is attractive within $\mathcal{X}_{LK}$ under control from $\Kzcbf(x)$. On the other hand, if we choose $\D=\C_{lk}$, we will get a larger set $\C_{lk}$ in principle (since it is not attractive outside $\C_{lk}$), but constructing $h_{lk}$ that defines such $\C_{lk}$ would then become more involved. Note that for this case, Theorem \ref{thmLK} is still true if condition \eqref{c8} is changed into:
\begin{align}
	&\frac{\partial h_{lk}}{\partial \x_1}[A(v_f)\x_1+Bu(\x_1,d,v_f) +Ed]v_f+\gamma h_{lk}v_f  - h_{lk}s_{5}   \nonumber\\
	&-(\bar v-v_f)(v_f-\underline{v})s_{6}-(d_{\max}^2-d^2)s_{7}\in\Sigma[\x_1,d,v_f].\label{DClk}
\end{align}
where $s_5,s_6,s_{7}\in\Sigma[\x_1,d,v_f]$ are multipliers to be found.

As shown in \eqref{DClk}, $h_{lk}s_{5}$ is an additional bilinear term of the unknowns if SOS programs are applied to construct $h_{lk}$ for $\D=\C_{lk}$. To bypass this difficulty, we can divide $(\mathcal{P}_1)$ into two steps as follows: $(i)$ fix $h_{lk}$, search for $u\in\mathcal{R}_{\beta}[\x_1,d,v_f],s_i\in\Sigma[\x_1,d,v_f]$ by solving a feasibility SOS program, $(ii)$ fix $\hat h_{lk}$, the control $u$ and the multipliers $s_i$ obtained in $(i)$, search for $\kappa$ and maximize it by solving a maximization SOS program. Then, if Line 7 of Algorithm \ref{algo1} is replaced with these two steps, the resulting CBF $h_{lk}$ will satisfy properties (LK-P1)-(LK-P3).

\begin{remark}
	A bound on lateral acceleration $\dot{\nu}$, which was introduced as a soft constraints for LK in Subsection \ref{sec:specifications}, can be added as a (hard) constraint to the SOS programs. However, by doing this, the feasibility of $(\mathcal{P}_1)$ and $(\mathcal{P}_2)$ will no longer be guaranteed. Thus, this constraint is not considered, and will be discussed later in Section \ref{sec:composition}.
\end{remark}

\begin{remark} Using the SOS optimization is not the only way to design CBFs. Gerdes et al. developed a Lagrangian model of the lateral dynamics and then augmented the corresponding Hamiltonian with an additional potential term to enforce the invariance of a set delineated by the lane boundaries, in the face of road curvature variations \cite{GerdLanePotential06}. However, with this method, it is unclear how to address bounds on steering angle, yaw rate and lateral velocity, as we have done in \eqref{thmcon2}-\eqref{thmcon5}; moreover, there is no distinction between safety--staying within the lane markers---and performance---how much to override the driver or how close to remain centered in the lane. When applying LQR to the LK problem, the cost-to-go function resulting from solving the Riccati equation for a constant longitudinal speed $v_f$ does yield a quadratic barrier function for the closed-loop lateral-yaw model, with $v_f$ in a small neighborhood of the nominal speed, and hence is also a CBF for the open-loop lateral-law model for the same range of longitudinal speed. However, the LQR approach to developing a CBF cannot handle the bounded input/state  or the varying road curvature constraints; moreover, our experience is that the associated safe set computed from a sub-level set of the cost-to-go function is unacceptably small.
	
	
\end{remark}

\subsection{The CBF For ACC}

{\color{black}Suppose that the CBF for the ACC subsystem has the following form:
	\begin{align}\label{accform}
		h_{acc}(\x_2):&=D-\tau_{d}v_f -D_0 - \hat h_{acc}(v_f,v_l)
	\end{align}
	where $\hat h_{acc}(v_f,v_l)$ is a polynomial to be determined. According to the contract in Section \ref{sub:contract}, the CBF $h_{acc}$ will be designed to satisfy the following properties:}
\begin{align}
	&\mbox{(ACC-P1)}\;\C_{acc}:=\{\x_2\in\R^3|h_{acc}(\x_2)\ge 0\} \neq \emptyset,\label{accon1}\\
	&\mbox{(ACC-P2)}\; \forall v_f\in \D_{v_f}, \forall v_l\geq \underline{v},  \hat h_{acc}(v_f,v_l)\ge 0, \label{accon2}\\
	&\mbox{(ACC-P3)}\;\forall \x_2\in \mathcal{X}_{ACC}, \forall a_L\in\D_{a_L},\forall |\nu r|\le \nu_m r_m,\nonumber\\
	&\quad\; \sup_{u_2 \in U_{acc}}  \left[ L_{f_2+\Delta f_2} h_{acc}(\x_2) + L_{g_2} h_{acc}(\x_2) u_2 \right] \geq 0.\label{accon3}
\end{align}

{\color{black}When $h_{acc}(\x_2)=0$, the minimal safe distance for the controlled car is given by $D_{\min}=\tau_{d}v_f +D_0 + \hat h_{acc}(v_f,v_l)$, which is no less than $\tau_{d}v_f +D_0$ since  $\hat h_{acc}(v_f,v_l)\geq 0$. Therefore, the hard constraint \eqref{eqn:ACC_constraint} will be satisfied when $\x_2\in \C_{acc}$. Note that condition \eqref{accon3} implies the controlled invariance of $\mathcal{C}_{acc}$.  We also point out that the CBF cannot be simply chosen as $h_{acc}(\x_2):=D-\tau_{d}v_f -D_0$, since the set $\C_{acc}$ thus defined is not controlled invariant using an input $u_2\in U_{acc}$.

	It is clear that an overly conservative safe distance $D_{\min}$ is undesirable, as it will encourage other cars to cut into the lane.
	When the SOS program is used to construct $h_{acc}(\x_2)$, however, the resulting $D_{\min}$ was unnecessarily large.
	A physics-based optimization can be used to construct $h_{acc}$ by noting that
	the ACC subsystem has the following monotone property: if $(D_1,v_f,v_l)\in\C_{acc}$ when $a_L=-a_lg$ and $u_2=-a_fg$, then $(D_2,v_f,v_l)\in\C_{acc}$ for any $D_2\ge D_1$. This property was exploited in our previous work  to compute in closed form two sets of CBFs for ACC (see \cite{aaron2016barriertac} and its supplemental material \cite{barriersupplemental16}). A set of three or four continuously differentiable functions are provided, with which the CBF $h_{acc}$ is constructed from these functions by \eqref{glueh}.
	In the derivation in \cite{barriersupplemental16}, we assumed that the first equality in \eqref{ACCmodel} is simplified to
	$\dot{v}_f = \frac{\hat u_2}{m},$
	where  $\hat u_2\ge -\hat a_fmg$ for some $\hat a_f>0$. For the ACC subsystem considered here,  we have $\hat u_2 =  u_2 +F_r - m\nu r$, which implies that $\hat a_f  \ge a_f + (c_0+c_1\underline{v}+c_2\underline{v}^2)/mg - \nu_m r_m/g.$
	
	
	In summary, by using a deceleration bound that takes into account the aerodynamic drag and the bound of $|\nu r|$ in the contract, the closed-form CBFs proposed in \cite{aaron2016barriertac} and \cite{barriersupplemental16} are used to construct $h_{acc}(\x_2)$ satisfying properties (ACC-P1)-(ACC-P3).}

\section{Compositional Control Synthesis Via Quadratic Program}\label{sec:composition}

%

In this section, a solution will be provided to the composition problem of LK and ACC formulated in Section \ref{subsec:problem}.

{\color{black}Once the CBFs are obtained (off line), the controls are generated by quadratic programs that combine the hard constraints (i.e., safety), which are expressed as CBF conditions, and the soft constraints (i.e., performance objectives), which indicate closeness to a nominal controller but are overridden when they conflict with the hard constraints.}



The hard constraints are expressed as the controlled invariance of the sets $\C_{lk}$ for LK and $\C_{acc}$ for ACC, using the CBF condition. Because of the assumptions and guarantees between the two subsystems, the controlled invariant set for the compositional system is a Cartesian product  of $\C_{lk}$ and $\C_{acc}$, and the behaviors of the LK and ACC subsystems can be decoupled as long as their states are confined within these two sets, respectively. Therefore, local controllers for LK and ACC can be synthesized by solving two separate QPs.

Based on the CBFs $h_{lk}(\x_1),h_{acc}(\x_2)$ constructed in Section \ref{sec:barrsyn}, the hard constraints for LK and ACC can be expressed  with some positive gains $\gamma_1,\gamma_2$ as follows
\begin{align}
	& L_{f_1+\Delta f_1} h_{lk}(\x_1) + L_{g_1} h_{lk}(\x_1) u_1 +\gamma_1 h_{lk}(\x_1) \geq 0,\label{cbfcon1}\\
	& L_{f_2+\Delta f_2} h_{acc}(\x_2) + L_{g_2} h_{acc}(\x_2) u_2 +\gamma_2 h_{acc}(\x_2)\geq 0.\label{cbfcon2}
\end{align}




The soft constraint \eqref{LK_soft} for LK can be expressed as follows:
\begin{align}\label{clfcon1}
	u_1 & = \bar K(\x_1 - \x_1^{f}) + \delta_1,
\end{align}
where $\delta_1>0$ is a relaxation variable, $\x_1^f = [0, 0, 0, d]^\top$ is a feedforward term, and $\bar K$ is a feedback gain determined by solving a LQR problem such that $\x_1\rightarrow\x_1^f$.

For ACC, we use a candidate CLF
$V(\x_2): = (v_f- v_d)^2$ to express the soft constraint \eqref{eqn:ACCobj} with the following CLF condition:
\begin{align}\label{clfcon2}
	L_{f_2+\Delta f_2}V(\x_2) + L_{g_1}V(\x_2) u_2 + c V(\x_2)  \leq \delta_2,
\end{align}
where $\delta_2>0$ is the second relaxation variable, and $c>0$ is a given constant related to the convergence rate.

Then, among the set of controls that satisfy constraints \eqref{cbfcon1}-\eqref{cbfcon2}, the \emph{min-norm controllers} \cite{FreemanSIAM96} are obtained by solving the following two QPs:

\begin{align}
	\ubar_1^*(x) &= \underset{\ubar_1 = \left[u_1 , \delta_1\right]^\top \in  \R^2}{\operatorname{argmin}}   \frac{1}{2}
	\ubar_1^\top H_{\mathrm{lk}} \ubar_1 + F_{\mathrm{lk}}^\top \ubar_1\tag{QP-LK}\label{QPLK}\\
	\mathrm{s.t.}
	&\quad  A_{lk}  \ubar_1 \leq b_{lk}, \nonumber\\
	&\quad u_1=-K(\x_1-\x_1^{f})+\delta_1, \nonumber \\
	\ubar_2^*(x) &= \underset{\ubar_2 = \left[ u_2,\delta_2\right]^\top \in  \R^2}{\operatorname{argmin}}   \quad \frac{1}{2}
	\ubar_2^\top H_{\mathrm{acc}} \ubar_2 + F_{\mathrm{acc}}^\top \ubar_2 \tag{QP-ACC}\label{QPACC} \\
	\mathrm{s.t.}
	&  \quad A_{acc}  \ubar_2 \leq b_{acc}, \nonumber\\
	&  \quad A^{\mathrm{clf}}_{acc} \ubar_2 \leq b_{acc}^{\mathrm{clf}}+\delta_2, \nonumber
\end{align}
where
%
%
\begin{align*}
	&H_{\mathrm{lk}}  = \left[ \begin{array}{cc} 1 & 0 \\ 0 & p_2  \end{array} \right] , \:\:
	F_{\mathrm{lk}}  =  \left[ \begin{array}{c}  0 \\ 0    \end{array} \right], \\
	&H_{\mathrm{acc}} =  \left[ \begin{array}{cc} \frac{1}{m^2} & 0 \\ 0 & p_1  \end{array} \right] , \:\:
	F_{\mathrm{acc}}  =  - \left[ \begin{array}{c} \frac{\Fr }{m^2}  \\ 0    \end{array} \right],\\
	&A_{lk} = [ - L_{g_1} h_{lk}(\x_1), 0], \\
	&b_{lk} =  L_{f_1+\Delta f_1} h_{lk}(\x_1) + \gamma_1 h_{lk}(\x_1), \\
	&A_{acc} = [ - L_{g_2} h_{acc}(\x_2), 0], \\
	&b_{acc} =  L_{f_2+\Delta f_2} h_{acc}(\x_2) + \gamma_2 h_{acc}(\x_2), \\
	&A_{acc}^{clf} = [  L_{g_2}V(x), -1],\\
	&b_{acc}^{clf} =  - (L_{f_2+\Delta f_2}V(x) + c V(x)) ,
\end{align*}
and $p_1,p_2\gg 0$ are the penalizing weights for relaxation variables $\delta_1, \delta_2$, respectively. Here, $H_{acc},F_{acc}$ are chosen as such due to partial input/output linearization of
\eqref{ACCmodel} \cite{aaron2016barriertac}.

{\color{black}As \eqref{QPLK} and \eqref{QPACC} are convex QPs, they can be solved efficiently by current optimization solvers. Alternatively, it was shown in \cite{aaron2016barriertac} that $u_1,u_2$ obtained by \eqref{QPLK} and \eqref{QPACC} can be obtained in closed-form and are locally Lipshcitz continuous, which makes them particularly easy to use in embedded implementations.}




\begin{theorem}
	The solutions $\ubar_1^*(x)$ and $\ubar_2^*(x)$ generated by \eqref{QPLK} and \eqref{QPACC} constitute a locally Lipshcitz continuous control law that ensures the hard constraints \eqref{eqn:LK_constraint}, \eqref{eqn:LK_constraint2} and \eqref{eqn:ACC_constraint} are satisfied for all time.
\end{theorem}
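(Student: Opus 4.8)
The plan is to reduce the claim to the forward-invariance results of Section~\ref{sec:cbf}, after first checking that the QP-generated feedback is well defined and regular enough for those results to apply. I would organize the argument into three parts: (i) local Lipschitz continuity of $\ubar_1^*$ and $\ubar_2^*$; (ii) the fact that these optimizers satisfy the CBF conditions \eqref{cbfcon1}--\eqref{cbfcon2}; and (iii) a \emph{joint} invariance argument that discharges the coupling between the two subsystems.

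For (i), I would note that \eqref{QPLK} and \eqref{QPACC} are strictly convex quadratic programs (the Hessians $H_{\mathrm{lk}}$ and $H_{\mathrm{acc}}$ are positive definite) whose constraint data---$L_{g_1}h_{lk}$, $L_{f_1+\Delta f_1}h_{lk}$, $h_{lk}$, the analogous ACC quantities, and $V$---are polynomial, hence smooth in the state. Feasibility holds at every state because properties (LK-P3) and (ACC-P3) supply an admissible input meeting each CBF inequality, while the CLF constraint in \eqref{QPACC} is always slackened by $\delta_2$. The unique optimizer of a strictly convex QP with locally Lipschitz data and a feasible point is then locally Lipschitz; equivalently, I would invoke the explicit closed-form min-norm solution of \cite{aaron2016barriertac}, which is manifestly locally Lipschitz. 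This yields existence and uniqueness of closed-loop solutions. For (ii), the rows $A_{lk}\ubar_1\le b_{lk}$ and $A_{acc}\ubar_2\le b_{acc}$ are, by the definitions of $A_{lk},b_{lk},A_{acc},b_{acc}$, exactly the CBF inequalities \eqref{cbfcon1}--\eqref{cbfcon2}; since they are hard constraints, every feasible and in particular every optimal pair satisfies $u_1^*\in\Kzcbf(\x_1)$ and $u_2^*\in\Kzcbf(\x_2)$ in the sense of the respective CBFs.

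The main obstacle is part (iii), the circular assume--guarantee coupling: the LK certificate (LK-P3) is valid only while $v_f\in\D_{v_f}$, and the ACC certificate (ACC-P3) only while $|\nu r|\le\nu_m r_m$, yet $v_f$ is driven by the ACC loop and $\nu r$ by the LK loop. I would resolve this with a minimal-exit-time contradiction on the coupled trajectory. Property (LK-P2) gives $\C_{lk}\subset\mathring{\mathcal{X}}_{LK}$, so $|\nu|\le\nu_m$ and $|r|\le r_m$, hence $|\nu r|\le\nu_m r_m$ whenever $\x_1\in\C_{lk}$; and the contract's ACC guarantee supplies $v_f\in\D_{v_f}$ while the ACC loop is operating safely. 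Set $t^*=\inf\{t:\x_1(t)\notin\C_{lk}\ \text{or}\ \x_2(t)\notin\C_{acc}\}$. On $[0,t^*)$ both contract assumptions hold, so by (ii) both CBF conditions hold there; Theorem~\ref{cor:zbf} applied to $h_{lk}$ and Proposition~\ref{thm:gluebarrier} applied to the piecewise $h_{acc}$ glued via \eqref{glueh} then force $h_{lk}\ge 0$ and $h_{acc}\ge 0$ to persist up to and including $t^*$, which by continuity contradicts $t^*$ being an exit time; hence $t^*=\infty$ and both sets are forward invariant.

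Finally, forward invariance of $\C_{lk}\subset\mathring{\mathcal{X}}_{LK}$ delivers \eqref{eqn:LK_constraint}--\eqref{eqn:LK_constraint2}, while $h_{acc}\ge 0$ together with $\hat h_{acc}\ge 0$ from (ACC-P2) gives $D\ge\tau_{d}v_f+D_0+\hat h_{acc}(v_f,v_l)\ge\tau_{d}v_f+D_0$, i.e.\ \eqref{eqn:ACC_constraint}. The delicate point throughout is that the two invariance certificates are each only conditionally valid, so the proof must keep both contract assumptions alive \emph{simultaneously}; the strict containment $\C_{lk}\subset\mathring{\mathcal{X}}_{LK}$ is what gives the margin that makes the simultaneous argument go through, and the one ingredient I would be most careful to justify is that the ACC loop indeed maintains $v_f\in\D_{v_f}$ (the LK assumption), since this is the guarantee on which the whole coupling hinges.
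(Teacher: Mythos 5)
Your parts (i) and (ii) are sound and coincide with what the paper itself relies on: strict convexity of the QPs plus the closed-form min-norm solution of \cite{aaron2016barriertac} give local Lipschitz continuity, and the rows $A_{lk}\ubar_1\le b_{lk}$, $A_{acc}\ubar_2\le b_{acc}$ are by construction exactly the CBF inequalities \eqref{cbfcon1}--\eqref{cbfcon2}. In fact the paper states this theorem with no explicit proof at all; its justification is the preceding discussion (the contract decouples the subsystems, the product of $\C_{lk}$ and $\C_{acc}$ is controlled invariant, invariance follows from Theorem~\ref{cor:zbf} and Proposition~\ref{thm:gluebarrier}), so your proposal is a reconstruction, and a sharpening, of that implicit argument.

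The gap is in part (iii), at precisely the point you flag but never close. Your minimal-exit-time argument defines $t^*$ as the first exit from $\C_{lk}$ or $\C_{acc}$, and on $[0,t^*)$ you need \emph{both} contract assumptions alive: $|\nu r|\le \nu_m r_m$ does follow from $\x_1\in\C_{lk}\subset\mathring{\mathcal{X}}_{LK}$ via (LK-P2), but $v_f\in\D_{v_f}$ does not follow from $\x_2\in\C_{acc}$. The set $\C_{acc}=\{\x_2 : h_{acc}(\x_2)\ge 0\}$ encodes only the headway constraint $D\ge \tau_d v_f+D_0+\hat h_{acc}(v_f,v_l)$ and places no upper or lower bound on $v_f$ itself; hence the invariance of $\C_{acc}$ --- which is all that Theorem~\ref{cor:zbf}/Proposition~\ref{thm:gluebarrier} can give you on the ACC side --- does not deliver the LK assumption, and the contradiction at $t^*$ does not go through as written. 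Appealing to ``the contract's ACC guarantee'' here is circular, because that guarantee is exactly what has to be proved about the closed-loop ACC controller. To close the hole one must make the speed bounds themselves invariant: either augment \eqref{QPACC} with an additional hard CBF-type constraint for $h_v(v_f):=(\bar v - v_f)(v_f-\underline{v})$ (feasible within $U_{acc}$ given the standing assumption $v_l\ge\underline{v}$ used in (ACC-P2)), and redefine $t^*$ to include violation of $\D_{v_f}$, or prove directly from the structure of \eqref{QPACC} that $v_f$ cannot cross $\bar v$ upward (the CLF term only drives $v_f$ toward $v_d\le\bar v$ and the headway CBF only ever demands deceleration) nor cross $\underline{v}$ downward while $v_l\ge\underline{v}$. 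Since the paper never spells any of this out either, your attempt is no less complete than the paper's own treatment --- but as a self-contained proof it fails at this step, and it is worth noting that your exposure of this weak link is itself valuable.
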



\begin{remark}
	It is possible to add more performance objectives as soft constraints to the QPs \eqref{QPLK} and \eqref{QPACC}. For instance,
	the soft constraint about the lateral acceleration can be expressed as $|\dot{\nu}|\le \dot{\nu}_{\max}+\delta_3$
	where $\delta_3$ is another relaxation variable and $\dot{\nu}_{\max}$ is the given lateral acceleration bound.
	Adding this constraint and modifying the matrices $H_{lk},F_{lk}$ to  \eqref{QPLK} in an obvious way, we can still obtain a solution that ensures the satisfaction of the hard constraints.
\end{remark}

{\color{black}
	The objective functions of \eqref{QPLK} and \eqref{QPACC} can be expressed alternatively as minimizing the difference of the real control and some nominal control. The corresponding QPs are expressed as follows:
	\begin{align}
		u_1^*(x) &= \underset{u_1 \in  \R}{\operatorname{argmin}} \|u_1-u^{lk}_{no}\|_2
		\tag{QP-LK2}\label{QPLK2}\\
		\mathrm{s.t.}
		&\quad  A_{lk}  u_1 \leq b_{lk}, \nonumber\\
		u_2^*(x) &= \underset{u_2\in  \R}{\operatorname{argmin}} \|u_2-u^{acc}_{no}\|_2
		\tag{QP-ACC2}\label{QPACC2} \\
		\mathrm{s.t.}
		&  \quad A_{acc}  u_2 \leq b_{acc}. \nonumber
	\end{align}
	The nominal control $u^{lk}_{no}$ (resp. $u^{acc}_{no}$) can be either determined by the CLF condition (resp. the LQR solution) shown above or any other legacy control laws such as those have been developed by OEMs. This shows a particular advantage of the proposed control approach in that it can endow a legacy controller as a correct-by-construction solution, where the safety of the closed-loop system is guaranteed by the CBF conditions. Particularly, as QPs \eqref{QPLK2} and \eqref{QPACC2} essentially involve computing a minimum distance to a convex set, their closed-form solutions can be easily obtained \cite{LuenbergerOptimizationVectorSpaceMethods}. Hence, the onboard implementation of our controller is no more
	burdensome than a classic PID controller.}




\section{Simulation}\label{sec:simu}
%
%
%


{\color{black}In this section, we apply the control laws $u_1,u_2$, which are obtained by solving QPs \eqref{QPLK} and \eqref{QPACC}, to the simultaneous operation of LK and ACC in a 16 degree of freedom model in Carsim, which is  a widely used vehicle simulation package in industry. Although the controllers are designed by the widely used simplified models \eqref{LKmodel} and \eqref{ACCmodel}, the overall system is shown to satisfy all of the safety specifications.
	
	
	
	The parameter values related to the vehicle dynamics are extracted from the “D-Class Sedan” model in CarSim, and are shown in Table \ref{tab:para}.
	The controlled car is allowed to employ a maximal deceleration of $0.25~g$, maximal steering angle of $0.06~rad$ (i.e., approximately 3.5 degrees), and have a desired pre-set speed $v_d=22~m/s$ and time-headway setting $\tau_d=1.8$ seconds. The bound $d_{\max}$ related to the road curvature is given as $0.1~rad/s$, and the bounds related to the lateral dynamics are given as $y_{\max}=0.9~m$,  $\nu_{\max}=1~m/s$, $\Delta\psi_{\max}=0.05~rad$ and $r_{\max}=0.3~rad/s$, respectively.
}

\begin{table}[!bth]
	\caption{Parameter values and constraints}\label{tab:para}
	\begin{center}
		\begin{tabular}{|c|c||c|c||c|c|}
			\hline
			$m$  & 1650 $kg$ & $y_m$ & 0.9 $m$ & $p_1$ &  1000\\
			$c_0$ & 51 $N$ & $\nu_m$ & 1.0 $m/s$ & $p_2$ & 1000\\
			$c_1$ & 1.26 $N  s/m$ &$\Delta\psi_m$ & 0.05 $rad$ & $p_3$ & 100 \\
			$c_2$ & 0.4342 $N  s^2/m^2$ & $r_m$ & 0.3 $rad/s$ & $a_f$ & 0.25\\
			$a$ & 1.11	$m$ & $\underline{v}$ & 15 $m/s$ & $a_f'$ &  0.25\\
			$b$ & 1.59 $m$ & $\bar{v}$    & 30 $m/s$ & $a_l$ &  0.25\\
			$C_{f}$ & $133000$ $N/rad$ & $v_{\min}$ & 15 $m/s$ &  $\tau_d$ & 1.8\\
			$C_{r}$ & $98800$ $N/rad$ & $v_{\max}$ & 30 $m/s$ & $\gamma_2$& 2 \\
			$I_z$ & 2315.3 $kg\; m^2$ $rad/s$ &$v_d$ & 22 $m/s$ &  $\gamma_1$& 2 \\
			$d_{\max}$ & 0.1 & $D_0$ & 0.1 $m$ &$c$ & 10 \\
			$\dot{\nu}_{\max}$ & 0.25 $m/s^2$ & $g$ & 9.81 $m/s^2$ & $\hat\delta_f$ & 0.06  \\
			\hline
		\end{tabular}
	\end{center}
\end{table}


{\color{black}With the given parameters, the CBF $h_{lk}(\x_1)$ is obtained by solving the SOS programs using the MATLAB toolbox yalmip along with the SDP solver Mosek. For ACC, the set of \emph{optimal barriers} $h_{acc}(\x_2)$  developed  in \cite{barriersupplemental16} are used.  The QPs are solved using the MATLAB command \textit{quadprog}, but they could be just as easily solved in closed-form as mentioned in Section \ref{sec:composition}.
	The gain matrix $\bar K$ in \eqref{clfcon1} is obtained by solving an LQR problem. Assume that the controlled car uses a lateral preview of approximately $0.4$ seconds, which corresponds to an ``output'' $Cx$ with $C=[1, 0, 10, 0]$.  Given the control weight $R=600$ and the state weight matrix $Q= K_p C^\top C + K_d C^\top A_1^\top A_1 C$,
	where $A_1$ is given in \eqref{PVLK} and $K_p=5,K_d=0.4$,
	the feedback gain $\bar K$ is determined by solving an LQR problem with such $Q$ and $R$.

	
	
	
	We simulated the controller in Carsim on a curved road, with initial condition $(v_f(0),v_l(0),D(0))=(18,17,65)$ and $(y(0),\nu(0),\Delta\psi(0),r(0))=(0,0,0,0)$. Each subfigure of Figure \ref{fig:simu} is interpreted as follows:
	\begin{itemize}
		\item
		Subfigure (a) shows the speed profile of the lead car $v_f$ (in blue) and the controlled car $v_l$ (in black), where the desired speed $v_d$ is depicted in dotted green line.
		During $t=0s$ and $t=7s$, the controlled car accelerates and achieves the desired speed $v_d$;
		during $t=7s$ and $t=12s$, the controlled car slows down to maintain a safe distance with the lead car; during $t=12s$ and $t=24s$, the controlled car achieves the desired speed again; after $t=24s$, the controlled car slows down to keep a safe distance with the lead car and eventually maintains the same speed as the lead car. The ``apparent changes in modes'' are all achieved by the QP guaranteeing the  hard (safety) constraint and meeting the soft (performance) constraint on speed tracking as closely as possible. There are no if-then-else or case statements involved.
		\item
		Subfigure (b) shows the state evolution of the lateral dynamics $\x_2$; it can be seen that $y,\nu,\Delta\psi,r$ are within their respective bounds during the simulation. The abrupt changes happening at $t=20s,25s,45s$ are due to step changes in the road curvature. 
		\item
		Subfigure (c) shows the wheel force $u_1$ divided by $mg$ (in blue) and the bounds $-a_f,a_f'$ (in dotted red line); it can be seen that the wheel force constraint is satisfied as $u_1\in U_{acc}$.
		\item
		Subfigure (d) shows the steering angle $u_2$ (in blue) and the bound $\hat\delta_f,-\hat\delta_f$ (in dotted red line); it can be seen that the steering angle constraint is satisfied as $u_2\in U_{lk}$.
		\item
		Subfigure (e) shows the values of $r$ (in black) and $d$ (in blue); it can be seen that $r$ tracks $d$ pretty well, indicating satisfaction of the soft constraint \eqref{eqn:LK_constraint2}.
		\item
		Subfigure (f) shows the values of the actual time headway $\tau$ (in blue) and the desired time headway $\tau_d$ (in dotted red line); it can be seen that $\tau\geq \tau_d$ as desired.
		\item
		Subfigure (g) shows the  values of $h_{acc}$ (in blue) and the zero-value line (in dotted red); it can be seen that $h_{acc}$ is always positive as desired, which implies satisfaction of the ACC constraint \eqref{eqn:ACC_constraint}. Note that $h_{acc}>0$ during $t\in[12s,24s]$ because the ACC-controlled car has achieved its desired speed and has no intension to reduce the relative distance.
		\item
		Subfigure (h) shows the values of $h_{lk}$ (in blue) and the zero-value line (in dotted red); it can be seen that $h_{lk}$ is always positive as desired, which implies satisfaction of the LK constraints \eqref{eqn:LK_constraint} and \eqref{eqn:LK_constraint2}.
	\end{itemize}}
	
	\begin{figure*}[!hbt]
		\begin{center}
			\subfigure[]{\includegraphics[width=0.41\textwidth]{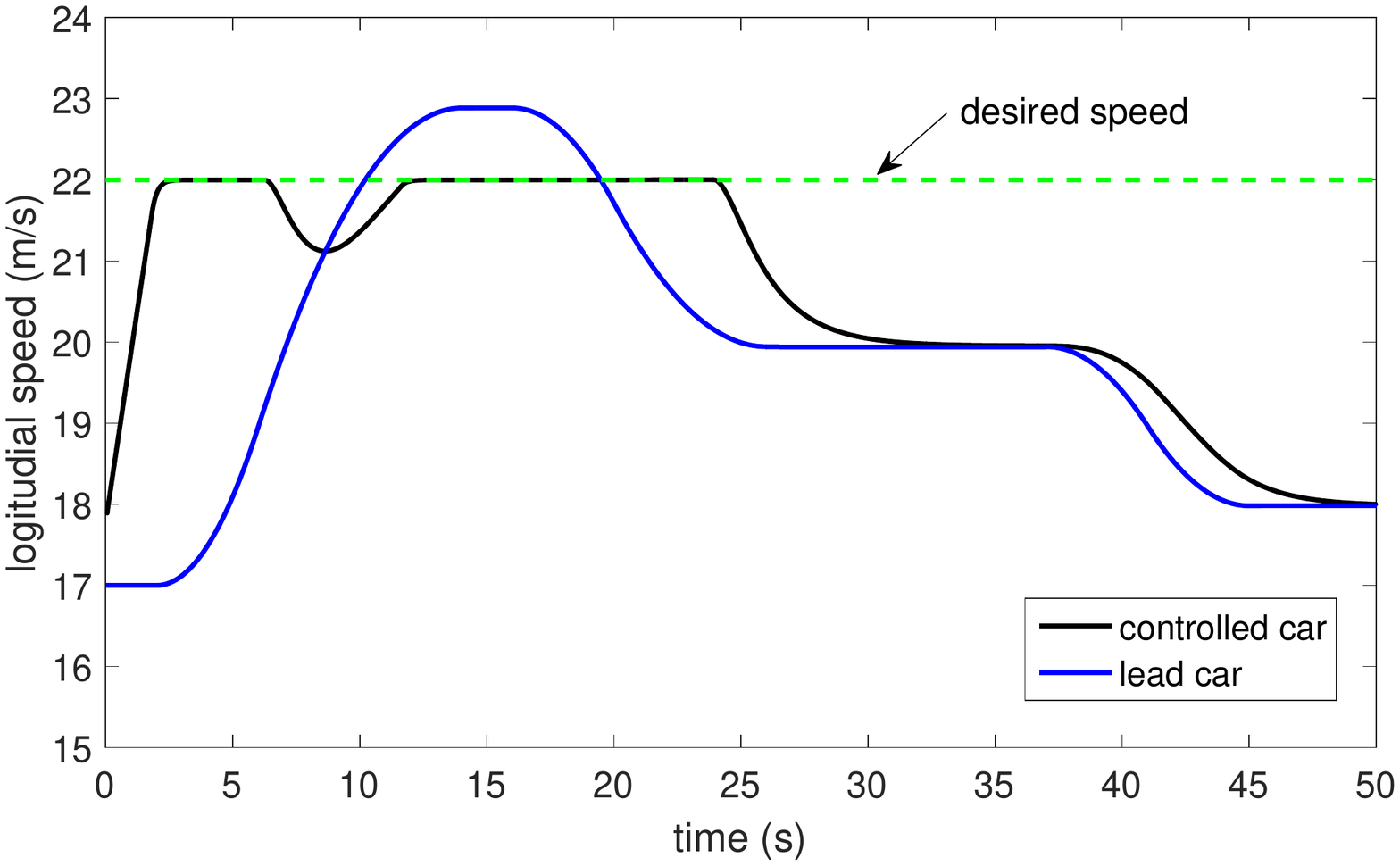}}\hskip 8mm
			\subfigure[]{\includegraphics[width=0.41\textwidth]{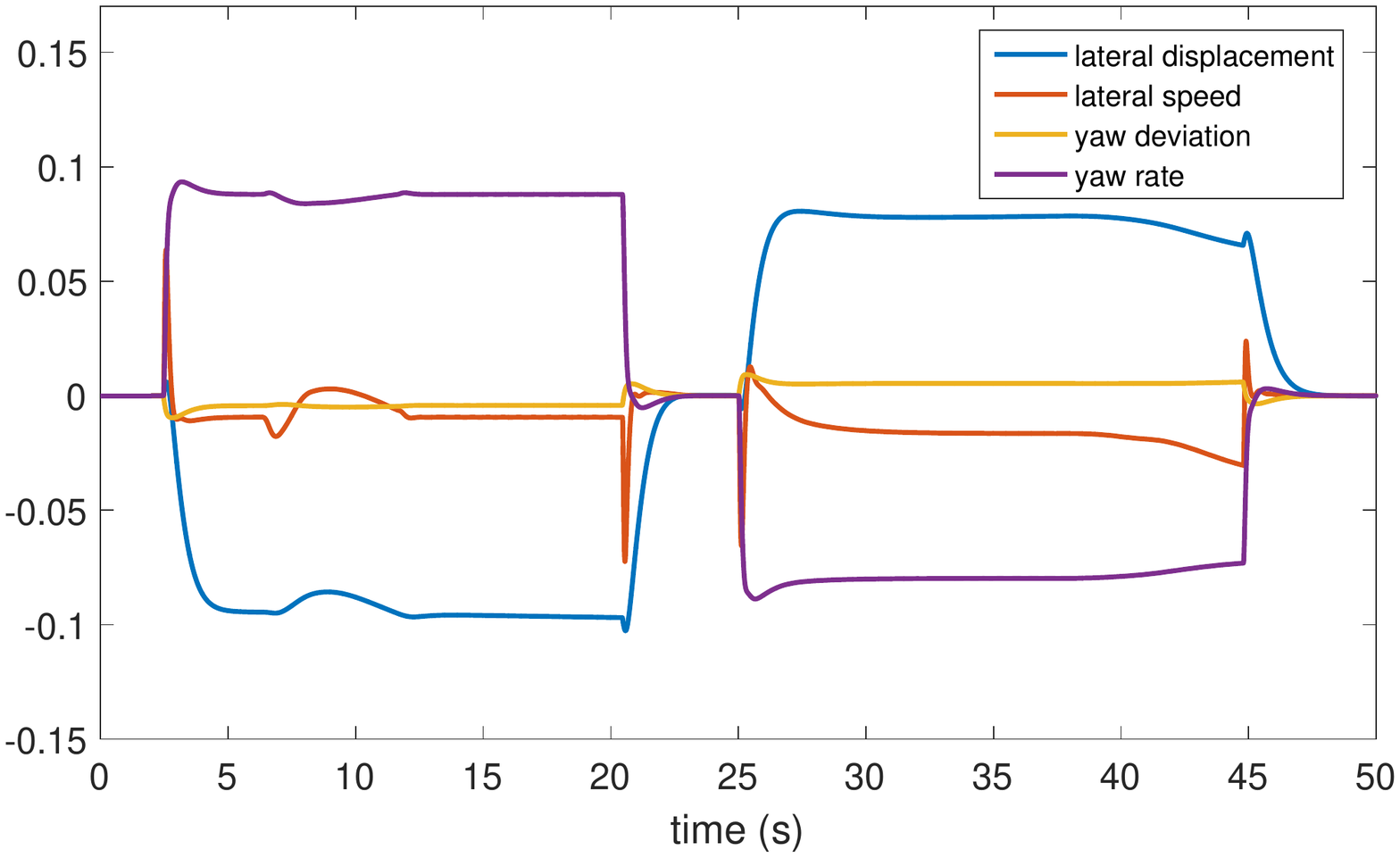}}
			\subfigure[]{\includegraphics[width=0.41\textwidth]{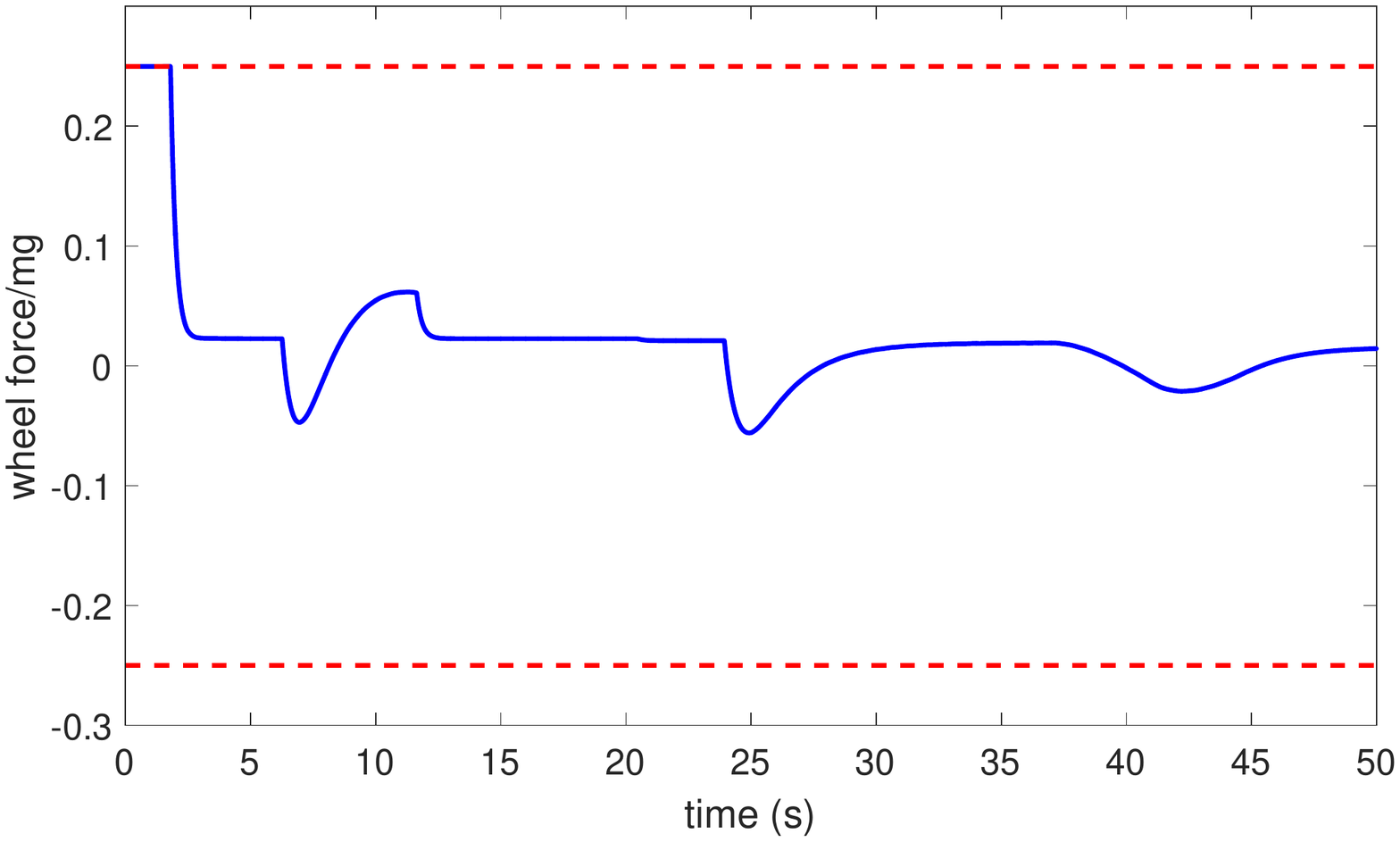}}\hskip 8mm
			\subfigure[]{\includegraphics[width=0.41\textwidth]{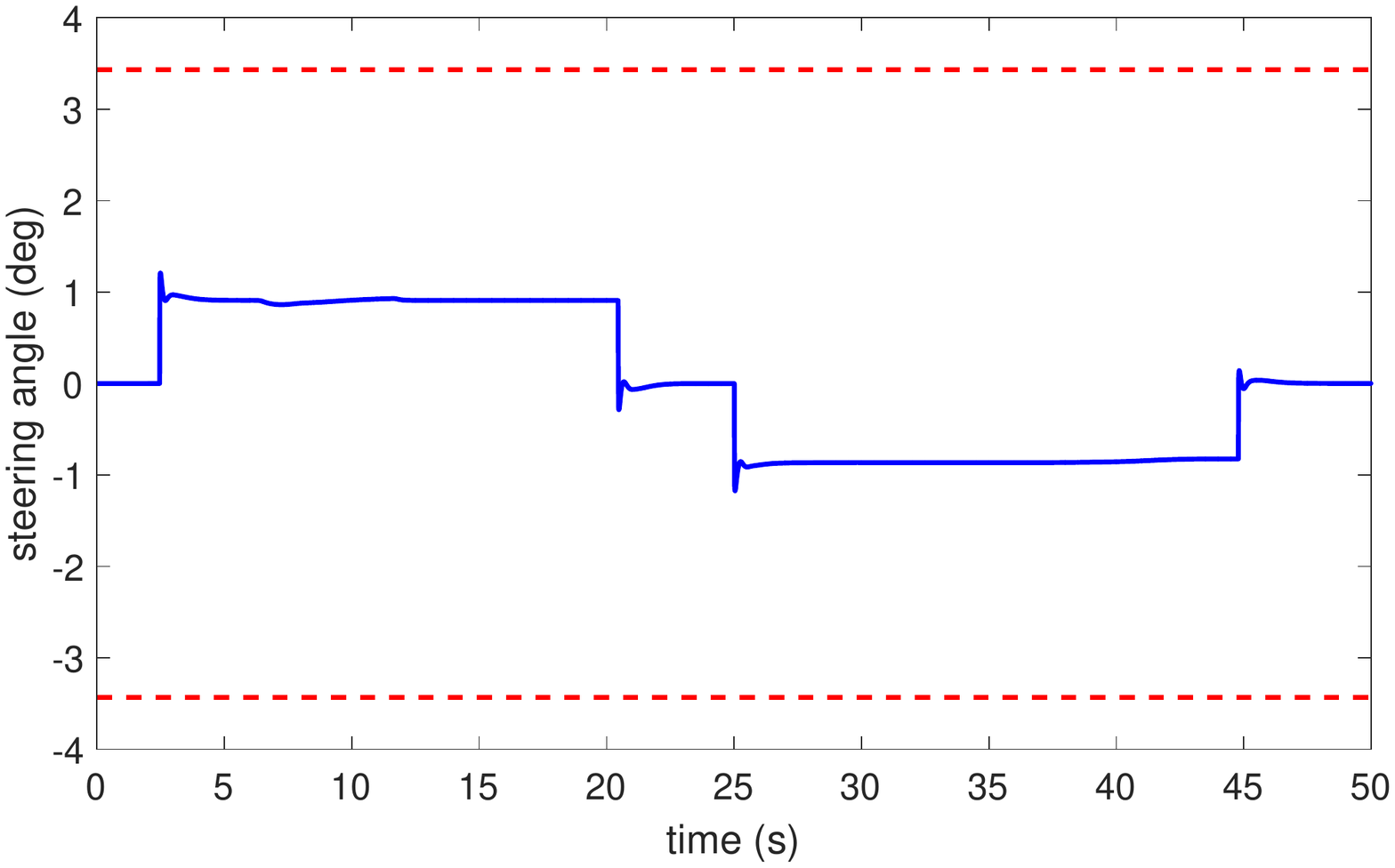}}
			\subfigure[]{\includegraphics[width=0.41\textwidth]{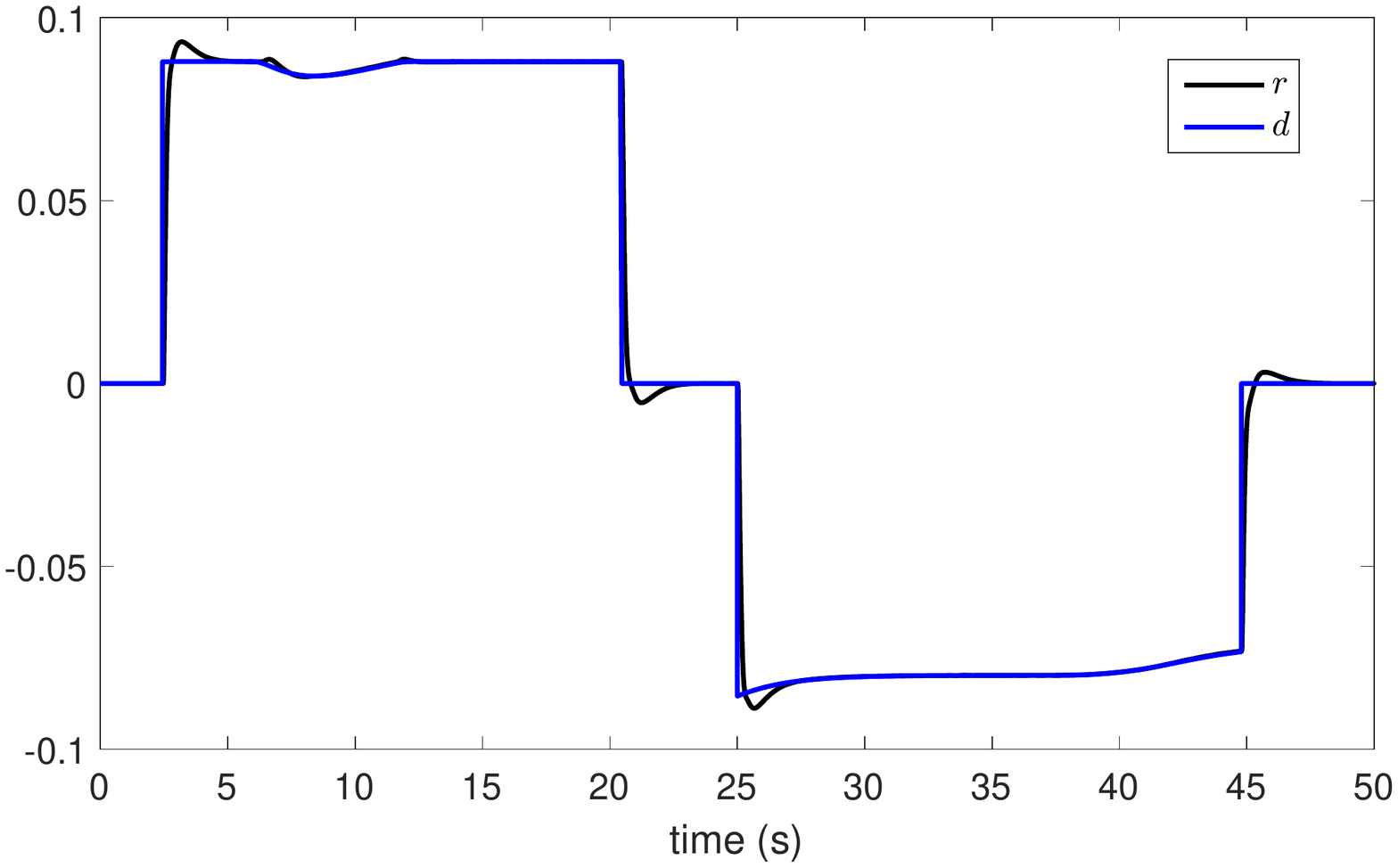}}\hskip 8mm
			\subfigure[]{\includegraphics[width=0.41\textwidth]{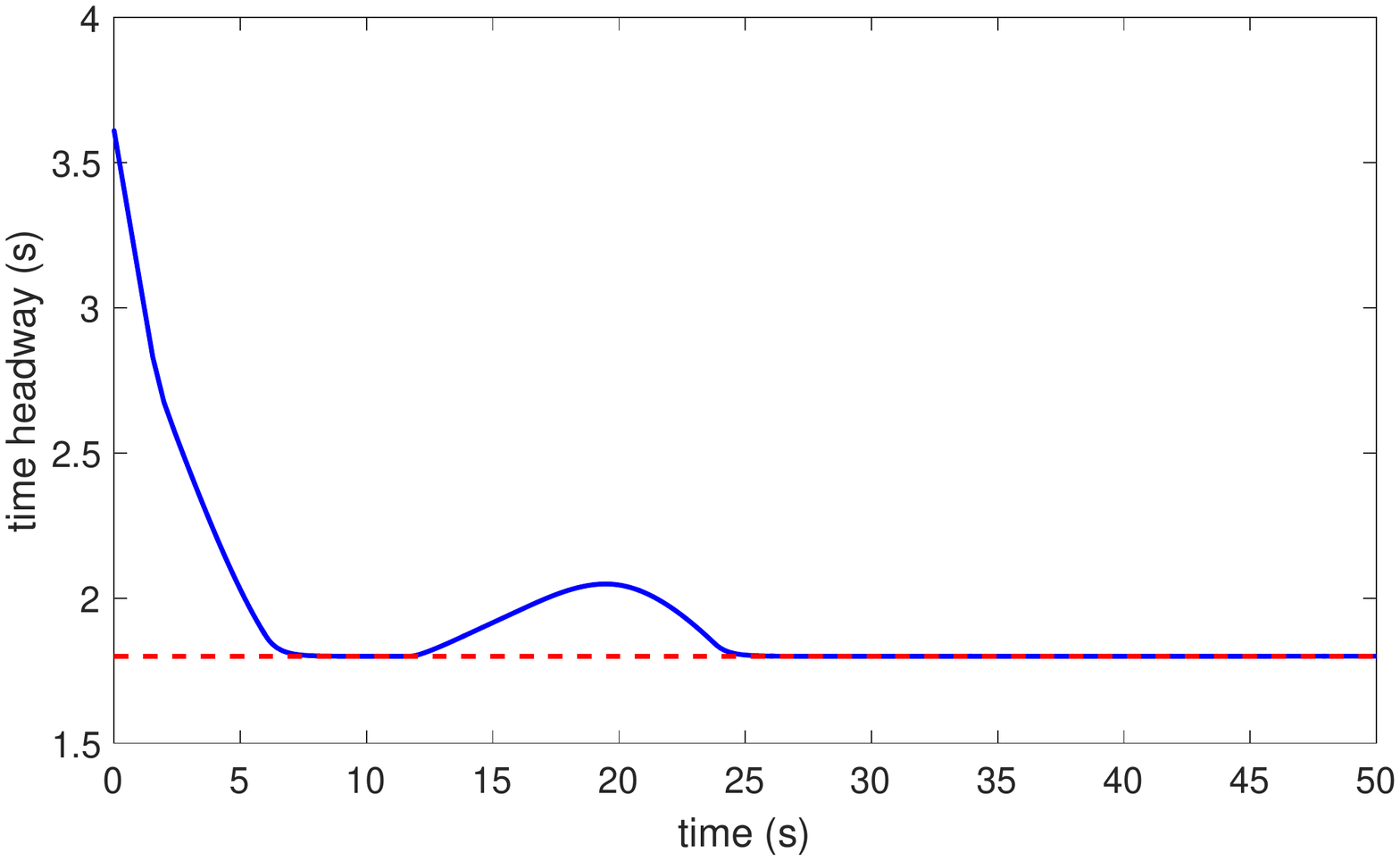}}
			\subfigure[]{\includegraphics[width=0.41\textwidth]{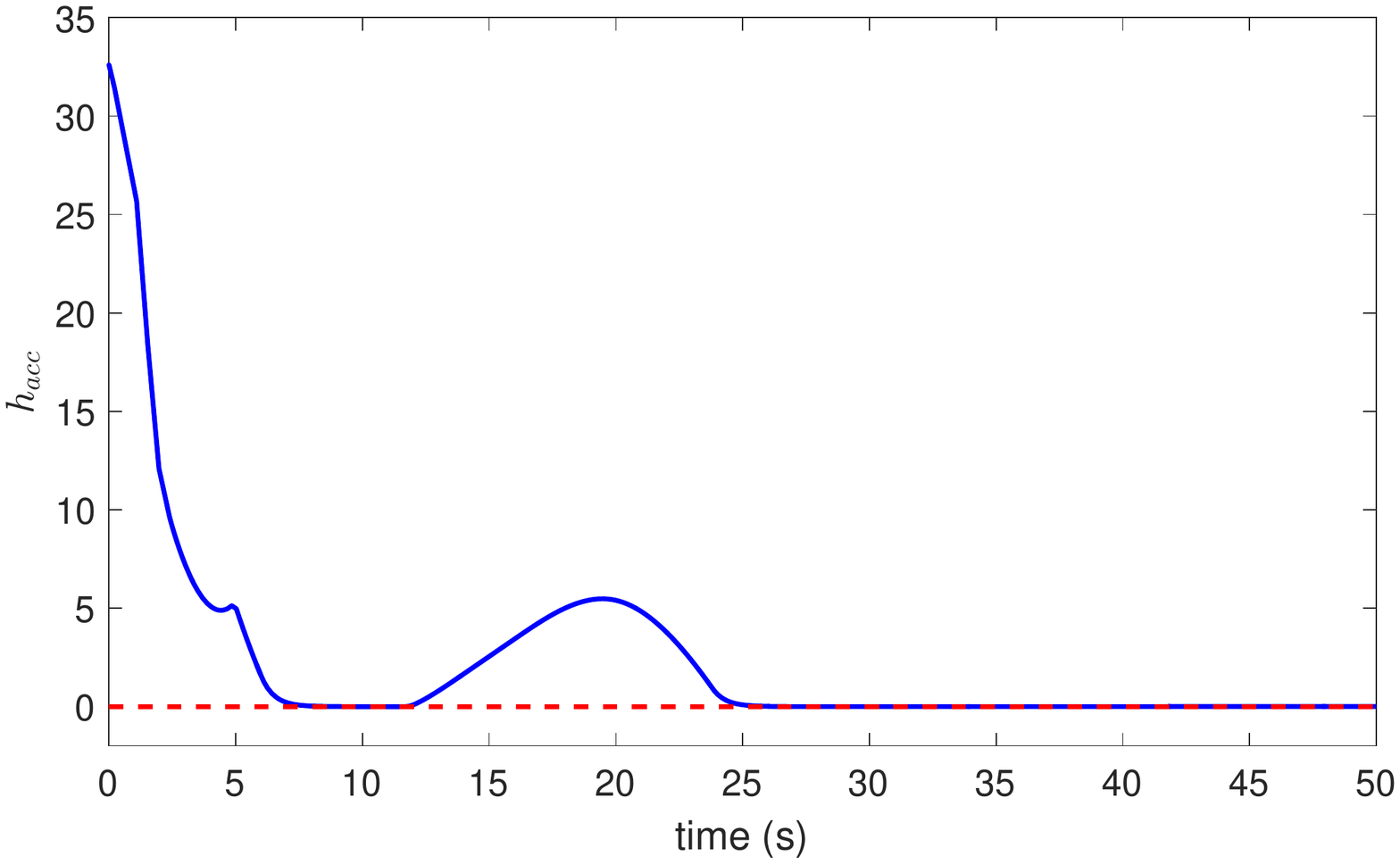}}\hskip 8mm
			\subfigure[]{\includegraphics[width=0.41\textwidth]{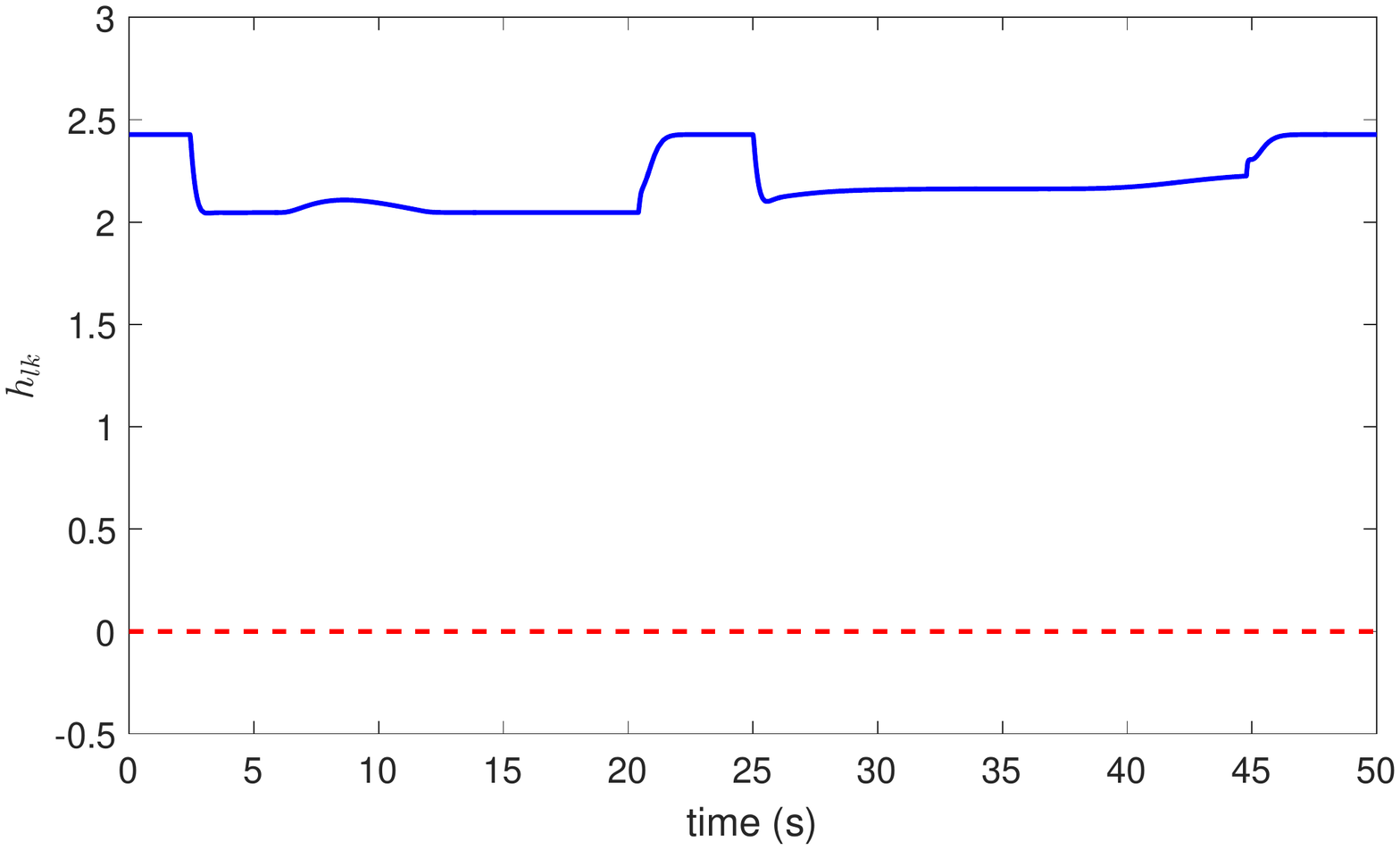}}
			\caption{(a) Speed of the controlled car $v_f$ (in black), speed of the lead car $v_l$ (in blue), and the desired speed $v_d$ (in dotted green). (b) State evolution of the lateral dynamics $y,\nu,\Delta\psi,r$.
				(c) The wheel force $u_1$ divided by $mg$ (in blue) and its bound $\pm 0.25$ (in dotted red). (d) The steering angle $u_2$ and its bound $\pm 3.4$ deg (in dotted red). (e) Values of $r$ (in black) and $d$ (in blue). (f) Values of the actual time headway (in blue) and the desired time headway (in dotted red) (g) Values of the CBF  $h_{acc}$ (in blue), where non-negativeness implies satisfaction of the ACC constraint \eqref{eqn:ACC_constraint}. (h) Values of the CBF  $h_{lk}$ (in blue), where non-negativeness implies satisfaction of the LK constraints \eqref{eqn:LK_constraint} and \eqref{eqn:LK_constraint2}.}\label{fig:simu}
		\end{center}
	\end{figure*}

\section{Conclusions}\label{sec:conclusion}

{\color{black} In this paper, we developed a control design approach with correctness guarantees for the simultaneous operation of lane keeping and adaptive cruise control, where the longitudinal force and steering angle are generated by solving quadratic programs. The safety constraints are hard constraints that are enforced by confining the states of the vehicle within determined controlled-invariant sets, which are expressed as CBF conditions. The performance objectives are soft constraints that can be overridden when they are in conflict with safety. The proposed QP-based framework can integrate legacy controllers as the performance controller and endow them with correct-by-construction solutions that guarantee safety. Additionally, the QP solution is known in closed form, and thus the proposed algorithm can be implemented without online optimization, if desired. The effectiveness of the proposed controller is shown by simulations in Carsim.

	The SOS algorithm used to construct CBFs for the lane keeping is quite general and can be applied to other safety control problems as well. The assume-guarantee formalism is well adapted to modularity of the driver assistance modules studied in this paper because any control laws respecting the contracts given for lane keeping and adaptive cruise control, respectively, will guarantee safety of the closed-loop system when the two modules are activated simultaneously. This means in particular that the individual modules do not have to be provided by the same supplier as long as an OEM provides the correct contracts.
	
	A preliminary test of the LK and ACC algorithms has been conducted on the Khepera robot and the Robotarium testbed \cite{xuccta2017}. Our future plans include testing these algorithms on a full-sized vehicle. Additional challenges include validating the models used for control design and considering sensor errors when constructing CBFs.}

%
%


\bibliographystyle{IEEEtran}
\bibliography{./composite_references,./new_ref,./barrier,./SOS,./smallgain,./Rfunction,./vehicle,./invariance_control,./literature_review,./reference_governor}

\end{document}